\subjclass[2010]{Primary: 55J15, 57M15, 57Q10 Secondary: 05C05, 05C21, 05E45, 82C31}
\newtheorem{thm}{Theorem}[section]
\newtheorem*{un-no-thm}{Theorem}
\newtheorem{cor}[thm]{Corollary}     
\newtheorem{lem}[thm]{Lemma}         
\newtheorem{prop}[thm]{Proposition}
\newtheorem*{ass}{Assumption}
\newtheorem{bigthm}{Theorem}
\newtheorem{bigcor}[bigthm]{Corollary}
\theoremstyle{definition}
\newtheorem{defn}[thm]{Definition}   
\theoremstyle{definition}
\theoremstyle{definition}
\theoremstyle{remark}
\newtheorem{rem}[thm]{Remark}
\newtheorem*{acks}{Acknowledgements}
\newtheorem*{out}{Outline}
\newtheorem{ex}[thm]{Example}
\begin{document}
\title[On Kirchhoff's theorems with coefficients]{On Kirchhoff's theorems with
 coefficients in a line bundle}

\date{\today}

\author[M.J.~Catanzaro]{Michael J.\ Catanzaro}
\address{Department of Mathematics, Wayne State University, Detroit, MI 48202}
\email{mike@math.wayne.edu}
\author[V.Y.~Chernyak]{Vladimir Y.\ Chernyak}
\address{Department of Chemistry, Wayne State University, Detroit, MI 48202}
\email{chernyak@chem.wayne.edu}
\author[J.R.~Klein]{John R.\ Klein}
\address{Department of Mathematics, Wayne State University, Detroit, MI 48202}
\email{klein@math.wayne.edu}

\begin{abstract} We prove `twisted' versions of Kirchhoff's network theorem and 
Kirchhoff's matrix-tree theorem on connected finite graphs. 
Twisting here refers to chains with coefficients in a flat unitary 
line bundle. 
\end{abstract}

\maketitle
\setlength{\parindent}{15pt}
\setlength{\parskip}{1pt plus 0pt minus 1pt}
\def\bdot{\bold .}
\def\Sp{\bold S\bold p}

\def\vo{\varOmega}
\def\smsh{\wedge}
\def\^{\wedge}
\def\flush{\flushpar}
\def\id{\text{\rm id}}
\def\dbslash{/\!\! /}
\def\codim{\text{\rm codim\,}}
\def\:{\colon}
\def\holim{\text{holim\,}}
\def\hocolim{\text{hocolim\,}}
\def\cal{\mathcal}
\def\Bbb{\mathbb}
\def\bold{\mathbf}
\def\simtwohead{\,\, \hbox{\raise1pt\hbox{$^\sim$} \kern-13pt $\twoheadrightarrow \, $}}
\def\codim{\text{\rm codim\,}}
\def\stableto{\mapstochar \!\!\to}
\let\Sec=\S
\def\Z{\mathbb Z}
\def\Q{\mathbb Q}
\def\R{\mathbb R}
\def\C{\mathbb C}
\newcommand{\lra}{\longrightarrow}

\setcounter{tocdepth}{1}
\tableofcontents

\section{Introduction \label{sec:intro}}

It is well-known that the classical result of Kirchhoff on the flow of electricity through
a finite network admits an elegant formulation using algebraic topology 
\cite{Eckmann},\cite{NS},\cite{Roth}. 
For a finite connected $1$-dimensional CW complex
$\Gamma$, the real cellular chain complex $\partial \: C_1(\Gamma;\R) \to C_0(\Gamma;\R)$
is a homomorphism of finite dimensional real inner product spaces 
with orthonormal basis given by the set of cells. When the branches of the network have unit resistance, Kirchhoff's {\it network
theorem} is reflected in the statement that the restricted homomorphism 
\begin{equation} \label{eqn:boundary}
\partial \: B^1(\Gamma;\R) \to B_0(\Gamma;\R)
\end{equation}
is an isomorphism, where $B_0(\Gamma;\R)$ is the vector subspace of zero-boundaries and $B^1(\Gamma;\R)$ is the orthogonal complement to the space of 1-cycles
$Z_1(\Gamma;\R) \subset C_1(\Gamma;\R)$ (when the network has branches of
varying resistance, one rescales the inner product on $C_1(\Gamma;\R)$ accordingly).
Actually Kirchhoff's network theorem does more in that it provides 
a concrete expression for the inverse to
the isomorphism \eqref{eqn:boundary} in terms of the set of spanning trees
of $\Gamma$.  The expression amounts to an explicit formula
for the orthonormal projection of $C_1(\Gamma;\R)$ onto $Z_1(\Gamma;\R)$ in terms
of the set of spanning trees of $\Gamma$.

A companion result of Kirchhoff, which has gotten more press,
is the {\it matrix-tree theorem}, 
which computes the determinant of the restricted
combinatorial Laplacian
\[
\partial\partial^*\: B_0(\Gamma;\R) \to B_0(\Gamma;\R)\, .
\]
Here, $\partial^\ast$
denotes the formal adjoint to the boundary operator (for a slightly different formulation, see \cite[p.~57]{Bollobas}).
In the unit resistance case, the result
says that that $\det(\partial\partial^*)$ equals the product of the number
of vertices with the number of spanning trees of $\Gamma$.
In \cite{CCK}, using ideas from statistical mechanics, we
showed how the matrix-tree theorem can be derived from the general case of the 
network theorem. We also generalized both of these results to higher dimensional
CW complexes.

The main purpose of the current paper is to derive a twisted version of Kirchhoff's theorems. Here the twisting is given by taking  coefficients in a {\it complex line bundle}. 
In physical terms, the twisted version of the network theorem
turns out to model
the flow of current through an electrical network in the presence of {\it fluctuations}. These
fluctuations allow one to compute not only the distribution function of currents, but
also the generating function, via the Fourier transform.

\begin{rem} The physics papers \cite{CKS1} and \cite{CKS2} study the distribution of currents (i.e., homology classes in degree one) on graphs using a non-equilibrium statistical mechanics formalism. The main invariant appearing in these papers is given by averaging currents over stochastic trajectories in a certain long time and low temperature limit. From the physics point of view, one is interested in computing the distribution function. However, it is more convenient to compute the generating function, associated with the probability distribution, which are related via a Fourier transform. The latter can be done by twisting 
the graph Laplacian by a line bundle. It is in this sense that the study of fluctuations corresponds to twisting the Laplacian by a line bundle.
\end{rem}

Our main result is a twisted version of Kirchhoff's projection
formula (Theorem \ref{bigthm:twisted-kirchhoff}). 
As an application, we will deduce a twisted version of the matrix-tree theorem (Theorem \ref{bigthm:twmtt}). 
Suitably reformulated, our twisted matrix-tree theorem is actually
a result of Forman \cite[eq.~(1)]{Forman} which
we first learned about in a recent paper of Kenyon \cite[thm.~5]{Kenyon}. 
Forman's proof is combinatorial, using an explicit expression for the determinant
in terms of symmetric groups.  
Kenyon's proof relies on the Cauchy-Binet theorem. By contrast, 
our approach is inspired by statistical mechanical ideas and closely follows
the untwisted version appearing in  \cite{CCK}.

\subsection*{Graphs}
A {\it graph} $\Gamma$ is a CW complex of dimension one. 
We let $\Gamma_0$ denote the set of $0$-cells and $\Gamma_1$ the 
set of $1$-cells.
A $0$-cell is called a {\it vertex} and a $1$-cell is called an {\it edge.}
The entire structure of $\Gamma$ is given by a function
\begin{equation}
\label{eqn:d-zero_d-one}
(d_0,d_1)\: \Gamma_1 \to \Gamma_0 \times \Gamma_0\, ,
\end{equation}
which sends an edge $b$ to its initial and terminal endpoints (where the edge
$b$ is oriented using its characteristic map $\chi_b\: [0,1] \to \Gamma$).
Given the function \eqref{eqn:d-zero_d-one}, one can reconstruct $\Gamma$ 
by taking
\[
\Gamma_0 \cup (\Gamma_1 \times [0,1])
\]
where the union is amalgamated over the map $\Gamma_1 \times \{0,1\}\to \Gamma_0$
given by $(b,0) \mapsto d_0 (b)$ and $(b,1) \mapsto d_1(b)$.

A {\it loop edge} is an edge such that $d_0(b) = d_1(b)$.
If $b$ is not a loop edge, it is said to be {\it regular}.

\subsection*{Flat line bundles on graphs} 
A {\it flat complex vector bundle} $\rho$ on a graph $\Gamma$ is a rule which assigns to each vertex
$i \in \Gamma_0$ a finite rank complex vector space $V_i$ over $\Bbb C$ and to each edge 
$b$ with $(d_0 b,d_1b) = (i,j)$ an isomorphism 
\[
\rho_b\: V_i \to V_j\, .
\] 
We say that $\rho$ is {\it unitary} if each $V_i$ is a hermitian inner product space
and each $\rho_b$ is unitary. In this paper we will deal exclusively with 
the rank one case, i.e., flat complex line bundles. Henceforth, 
we simplify terminology and refer to $\rho$ as a {\it line bundle.}\footnote{Strictly speaking, what we
have defined here is really the notion of a transport operator on $\Gamma$
associated with a flat connection, but we will not need to worry about this distinction.}

Given a line bundle $\rho$, by choosing a non-zero vector $u_i$ in $V_i$ having  unit norm, we can identify $V_i$ with $\Bbb Cu_i$, the complex vector space
spanned by $u_i$. Consequently, there is no loss in generality in assuming that 
$V_i = \Bbb C$ for every $i\in \Gamma_0$. In this instance $\rho_b\: \Bbb C \to \Bbb C$ is
given by multiplication by a unit complex number, which by abuse of notation we
denote as $\rho_b$. With respect to these choices, 
$\rho$ is given by a function $\Gamma_1 \to U(1)$.

Recall that a circuit $C$ of $\Gamma$ is a simple closed path.
An orientation of $C$ consists of a choice of direction for traversing $C$.

\begin{defn} If $C$ is an oriented circuit of $\Gamma$, then 
the {\it holonomy} of $\rho$ along $C$ is given by the product 
\[
\rho_C := \prod_{b \in C_1} \rho^{s_b}_b
\]
where $s_b = \pm 1$  according as to whether the orientation of $C$ is
the same as the orientation of $b$. 
\end{defn}

If $\bar C$ denotes $C$ with its reverse orientation, then
$\rho_{\bar C} = {\rho}^\ast_C$, where ${\rho}^\ast_C$ denotes
the complex conjugate of $\rho_{C}$.

More generally, suppose
$A \subset \Gamma$ is a subgraph with the following property
that each component $A_\alpha$ of $A$ has trivial Euler characteristic.
Then $A_\alpha$ has a unique circuit $C_\alpha$. Then $A$ has a preferred
set of circuits.
Assigning to $C_\alpha$ of $A$ is an arbitrary orientation,  we set
\[
\hat\rho_A := \prod_{\alpha} (\rho_{C_\alpha}-1)(\rho^\ast_{C_\alpha}-1) = \prod_{\alpha} (2-\rho_{C_\alpha}-\rho^\ast_{C_\alpha})\, ,
\]
where $\alpha$ ranges over the components of $A$. This last expression is 
well-defined and independent
of the choice of orientation for the circuits.  It  is also a real number.

\subsection*{The twisted chain complex}  
For $i = 0,1$, let $C_i(\Gamma;\rho)$ denote the $\Bbb C$-vector space having basis
$\Gamma_i$. Define the twisted boundary operator
\[
\partial\: C_1(\Gamma;\rho) \to C_0(\Gamma;\rho) 
\]
by mapping an edge $b$ to the 
vector $\rho_b d_0(b) - d_1(b)$ and extending linearly. The homology of this two-stage complex
is denoted by $H_\ast(\Gamma;\rho)$. It is invariant with respect to barycentric subdivision.
That is, if $\Gamma'$ is the barycentric subdivision of $\Gamma$ and $\rho'$ is a line
bundle on $\Gamma'$ such that $\rho_b = \rho'_{b_0}\rho'_{b_1}$ when $b = b_0b_1$ is the
subdivision of an edge $b$, then $H_\ast(\Gamma;\rho) \cong H_\ast(\Gamma';\rho')$.
Note that $H_1(\Gamma;\rho)$ is a subspace of $C_1(\Gamma;\rho)$ consisting of
the cycles. 

\begin{rem} This is the traditional notation. It is imprecise
since each vector space $C_i(\Gamma;\rho)$ does not depend
on $\rho$ whereas the boundary operator $\partial$ does. A more precise notation would
write the complex as $\partial_\rho\: C_1(\Gamma;\C) \to C_0(\Gamma;\C)$. 
\end{rem}

If $A\subset \Gamma$ is a subcomplex, we have the relative
chain complex $C_\ast(\Gamma,A;\rho)$ which is the quotient complex 
$C_\ast(\Gamma;\rho)/C_\ast(A;\rho)$. It has a basis consisting of the cells
of $\Gamma$ which are not in $A$. 

\subsection*{The resistance operator} 
A {\it resistance function} is a map $r\: \Gamma_1 \to \R_+$ which 
assigns to an edge $b$ a resistance $r_b > 0$. Associated with $r$ is the
{\it resistance operator}
\[
R\: C_1(\Gamma;\rho) \to C_1(\Gamma;\rho)\, ,
\]
which on basis elements is defined by  $b \mapsto r_b b$. 

\subsection*{The standard and modified inner products}
The {\it standard Hermitian inner product} 
on $C_1(\Gamma;\rho)$, denoted $\langle \phantom{a},\phantom{a} \rangle$,
is given on basis elements $b,b' \in \Gamma_1$ by 
\[
\langle b, b'\rangle := \delta_{bb'}\, ,
\]
where $\delta_{bb'}$ is Kronecker delta. 

Associated with the resistance operator $R$ is the {\it 
modified inner  product} on $C_1(\Gamma;\rho)$, denoted 
$\langle \phantom{a},\phantom{a} \rangle_R$, 
 is given by
\[
\langle b, b'\rangle_R := r_b \langle b,b'\rangle = \delta_{bb'}r_b\, .
\]

\subsection*{Twisted spanning trees}
From now on we assume that $\Gamma$ is connected
and finite.

\begin{defn}
A {\it $\rho$-spanning tree} for $\Gamma$ is a subcomplex $T \subset \Gamma$ such that
\begin{itemize}
\item $T_0 = \Gamma_0$,
\item $H_1(T;\rho) = 0$, and
\item The homomorphism 
$H_0(T;\rho) \to H_0(\Gamma;\rho)$ induced by the  inclusion
is an isomorphism.
\end{itemize}
\end{defn}

\begin{rem} When $\rho$ is the trivial line bundle, we recover the usual
notion of spanning tree. In the next section we characterize 
the $\rho$-spanning trees of $\Gamma$.
\end{rem}

We henceforth make the following assumption:

\begin{ass} The vector space $H_0(\Gamma;\rho)$  is trivial. 
\end{ass}

\begin{rem}
The triviality of $H_0(\Gamma;\rho)$ is equivalent to the statement that the holonomy over each cycle 
of $\Gamma$ is non-trivial. In the case of the twisted matrix-tree theorem
(Theorem \ref{bigthm:twmtt} below),
this assumption doesn't  cause additional restrictions on generality: 
if $H_0(\Gamma;\rho)$
is non-trivial, then the twisted Laplacian has trivial determinant.
\end{rem}

\begin{defn} \label{weights} The {\it weight} of a $\rho$-spanning tree $T$ is the
real number
\[
w_T := \hat \rho_T \prod_{b \in T_1} r_b^{-1}\, .
\]
\end{defn}

\begin{rem}  If we delete the factor $\hat \rho_T$ from
the above expression, we obtain the  weights appearing in 
classical untwisted version of the Kirchhoff formula (cf.\ \cite{NS}). 
\end{rem}

\subsection*{The operator $\bar T$}
Given a $\rho$-spanning tree $T$, we define an operator
\[
\bar T \: C_1(\Gamma;\rho) \to H_1(\Gamma;\rho)
\]
as follows: if $b\in T_1$ then $\bar T(b) = 0$. If $b \in \Gamma_1 \setminus T_1$,
 we form the graph $T \cup b$. Then $\dim_{\C} H_1(T\cup b;\rho)= 1$. Let
 $c\in H_1(T\cup b;\rho)$ be a non-zero vector, and set $t_b = \langle c,b\rangle$.
We set $\bar T(b) := c/t_b$. This does not depend on the choice of $c$. Note that
$H_1(T\cup b;\rho) \to H_1(\Gamma;\rho)$ is an inclusion, so this definition makes sense.

\begin{rem} It will be useful to have an alternative description of $\bar T$. 
Assume $b\in \Gamma_1 \setminus T_1$. The homology class
$[\partial b] \in H_0(T;\rho) = 0$ is trivial, so $\partial b \in C_0(T;\rho)$ bounds
a chain $u \in C_1(T;\rho)$. Then $c := b - u \in C_1(T\cup b;\rho)$ is a cycle such that
$t_b = \langle c,b\rangle = 1$. In this case $\bar T(b) = c$.
\end{rem}

\subsection*{The main results}
The twisted version of Kirchhoff's network
theorem will be a consequence of having a concrete description of the projection
operator from twisted 1-chains to twisted 1-cycles.
 
\begin{bigthm}[Twisted Projection Formula] 
\label{bigthm:twisted-kirchhoff} With respect to the modified
inner product $\langle \phantom{a},\phantom{a}\rangle_R$, 
the hermitian projection of $C_1(\Gamma;\rho)$ onto the subspace $H_1(\Gamma;\rho)$
is given by 
\[
  {\displaystyle
    \frac{1}{\Delta}\sum_T w_T \bar T \, ,
  }
\]
where $T$ ranges over the $\rho$-spanning trees of $\Gamma$ and
$\Delta = \sum_T w_T$.
\end{bigthm}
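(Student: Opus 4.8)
The plan is to show that the operator $P:=\tfrac{1}{\Delta}\sum_T w_T\bar T$ is the $\langle\cdot,\cdot\rangle_R$-orthogonal projection of $C_1(\Gamma;\rho)$ onto $H_1(\Gamma;\rho)$ by verifying the three defining properties of such a projection: $P^2=P$, $\im P=H_1(\Gamma;\rho)$, and $P$ self-adjoint with respect to $\langle\cdot,\cdot\rangle_R$.

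First I would record the structure of a single operator $\bar T$. For a $\rho$-spanning tree $T$ the map $\partial\colon C_1(T;\rho)\to C_0(\Gamma;\rho)$ is injective (because $H_1(T;\rho)=0$) and surjective (because $H_0(T;\rho)\to H_0(\Gamma;\rho)=0$ is an isomorphism), and since $\chi(T)=0$ forces $|T_1|=|\Gamma_0|$, a dimension count gives the internal direct sum $C_1(\Gamma;\rho)=C_1(T;\rho)\oplus H_1(\Gamma;\rho)$. Using the alternative description of $\bar T$ from the Remark — for $b\notin T_1$ one has $\bar T(b)=b-u$ with $u\in C_1(T;\rho)$ and $\partial u=\partial b$, while $\bar T(b)=0$ for $b\in T_1$ — one checks that $\bar T$ is exactly the projection onto $H_1(\Gamma;\rho)$ along $C_1(T;\rho)$; in particular $\bar T$ restricts to the identity on $H_1(\Gamma;\rho)$. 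From this the first two properties are formal. For any two $\rho$-spanning trees $T,T'$ one has $\bar T\bar T'=\bar T'$ (the image of $\bar T'$ lies in $H_1(\Gamma;\rho)$, on which $\bar T$ is the identity), so
\[
P^2=\frac{1}{\Delta^2}\sum_{T,T'}w_Tw_{T'}\,\bar T\bar T'=\frac{1}{\Delta^2}\Big(\sum_Tw_T\Big)\Big(\sum_{T'}w_{T'}\bar T'\Big)=P,
\]
and $P$ restricts to the identity on $H_1(\Gamma;\rho)$ because $\tfrac1\Delta\sum_Tw_T=1$, whence $\im P=H_1(\Gamma;\rho)$. (Here $\Delta=\sum_T w_T$ is a positive real number: each $w_T$ is positive since the holonomy around every circuit of a $\rho$-spanning tree is nontrivial, and $\rho$-spanning trees exist by the characterization in the next section.)

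The remaining, and by far the hardest, step is self-adjointness. Since $\im P=H_1(\Gamma;\rho)$ and $P^2=P$, this is equivalent to $\im(\id-P)\subseteq H_1(\Gamma;\rho)^{\perp_R}$, i.e.\ — because $(\id-P)(b)=\tfrac1\Delta\sum_T w_T u_b^T$ — to the vanishing
\[
\sum_T w_T\,\langle u_b^T,\,z\rangle_R=0
\]
for every edge $b$ and every $z\in H_1(\Gamma;\rho)$, where $u_b^T\in C_1(T;\rho)$ denotes the $C_1(T;\rho)$-component of $b$ (so $u_b^T=b-\bar T(b)$, equal to $b$ when $b\in T_1$, and otherwise the unique solution in $C_1(T;\rho)$ of $\partial u_b^T=\partial b$). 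Expanding the weights $w_T$ and the modified inner product in terms of the data $r$ and $\rho$ turns this into a purely combinatorial identity: the twisted analogue of the symmetry (reciprocity) of Kirchhoff's transfer-current matrix. I expect this to be the main obstacle, and the part that must be argued along the statistical-mechanical lines of \cite{CCK}: the summands are complex numbers that do not cancel tree-by-tree — for a fixed $T$ the roles of $b$ and an edge of $z$ are asymmetric, since an edge lying in $T_1$ annihilates one side but not the other — so the cancellation must come from reorganizing the sum over all $\rho$-spanning trees at once. Two natural routes are: grouping the trees by their underlying cycle-rooted spanning forest and invoking a Cauchy–Binet expansion for the twisted incidence matrix (compare Kenyon's proof of the twisted matrix-tree theorem), or a weight-preserving exchange/involution argument on pairs consisting of a $\rho$-spanning tree together with a twisted path realizing $\partial b$ as a boundary.

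Once self-adjointness is in hand, $P$ satisfies all three defining properties and is therefore the $\langle\cdot,\cdot\rangle_R$-orthogonal projection of $C_1(\Gamma;\rho)$ onto $H_1(\Gamma;\rho)$, which is the assertion of the theorem; the twisted network theorem then follows as indicated in the introduction, by identifying the orthogonal complement $H_1(\Gamma;\rho)^{\perp_R}$ with the image of the $R$-adjoint of $\partial$ and restricting $\partial$ to it.
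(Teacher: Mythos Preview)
Your reduction is sound: idempotence and the identification of the image follow from the fact (essentially Corollary~\ref{for:idempotent}) that each $\bar T$ is the projection onto $H_1(\Gamma;\rho)$ along $C_1(T;\rho)$. The genuine gap is that you do not prove self-adjointness. You correctly isolate it as the crux and even name an ``exchange/involution argument'' as a candidate approach, but you stop there; as written, the proposal is a plan that halts just before the actual content of the theorem.

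The paper's mechanism for self-adjointness is exactly a single-edge exchange identity (Proposition~\ref{prop:self-adjoint}): for $b_i\notin T_1$, $b_j\in T_1$, and $U=(T\setminus b_j)\cup b_i$, one has
\[
\hat\rho_T\,\langle\bar T(b_i),b_j\rangle \;=\; \hat\rho_U\,\langle b_i,\bar U(b_j)\rangle.
\]
Combined with the elementary relation $r_j w_T/\hat\rho_T=r_i w_U/\hat\rho_U$, this gives $w_T\langle\bar T(b_i),b_j\rangle_R=w_U\langle b_i,\bar U(b_j)\rangle_R$ term by term, and summing over the bijection $T\leftrightarrow U$ establishes that $\sum_T w_T\bar T$ is self-adjoint in the modified inner product (Lemma~\ref{lem:TU}). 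The exchange identity itself is proved by a direct combinatorial computation, splitting into cases according to whether $b_i$ joins one or two components of $T$; this is where the holonomy factors $\hat\rho_T$ and $\hat\rho_U$ differ and must be tracked explicitly, and it is the step that cannot be bypassed. Your instinct that the cancellation is organized by pairing trees rather than occurring tree-by-tree is correct, but the pairing is the one-edge swap $T\leftrightarrow U$, not a grouping by underlying CRSF, and no Cauchy--Binet or statistical-mechanical input is used here --- the low-temperature argument from \cite{CCK} appears only later, in the derivation of the matrix-tree theorem from the projection formula.
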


Our  twisted version of Kirchhoff's network theorem is

\begin{bigcor}[Twisted Network Theorem] \label{cor:twisted_Kirchhoff_formula}
Given a vector $\mathbf V \in C_1(\Gamma;\rho)$, there is only one vector
$z\in Z_d(\Gamma;\rho)$ such that $\mathbf V - Rz \in B^{1}(\Gamma;\rho)$. 
Furthermore, for each edge
$b$, we have
\[
  \displaystyle{
    \langle z,b\rangle = \frac{1}{\Delta} \sum_T \frac{w_T}{r_b} \langle \mathbf V,\bar T(b)\rangle \, . }
\]
\end{bigcor}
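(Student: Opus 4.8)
Since $\Gamma$ has no $2$-cells, $H_1(\Gamma;\rho)$ is precisely the cycle space $Z_1(\Gamma;\rho)=\ker\bigl(\partial\colon C_1(\Gamma;\rho)\to C_0(\Gamma;\rho)\bigr)$, while $B^1(\Gamma;\rho)$ is its orthogonal complement for the \emph{standard} inner product $\langle-,-\rangle$. Write $P:=\tfrac1\Delta\sum_T w_T\,\bar T$ for the operator of Theorem~\ref{bigthm:twisted-kirchhoff}; by that theorem $P$ is the orthogonal projection of $C_1(\Gamma;\rho)$ onto $Z_1(\Gamma;\rho)$ for the \emph{modified} inner product $\langle-,-\rangle_R$, so in particular $P$ is self-adjoint for $\langle-,-\rangle_R$ and $\ker P$ is the $\langle-,-\rangle_R$-orthogonal complement of $Z_1(\Gamma;\rho)$. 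The plan is to move between the two inner products using the two identities $\langle x,y\rangle_R=\langle Rx,y\rangle$ and $\langle z,b\rangle_R=r_b\langle z,b\rangle$ (both checked at once on basis elements), deduce existence and uniqueness of $z$, and then read off the edge formula. From $\langle x,y\rangle_R=\langle Rx,y\rangle$ one gets $x\perp_R Z_1(\Gamma;\rho)\iff Rx\perp Z_1(\Gamma;\rho)$, hence $\ker P=R^{-1}B^1(\Gamma;\rho)$ and a direct sum decomposition $C_1(\Gamma;\rho)=Z_1(\Gamma;\rho)\oplus R^{-1}B^1(\Gamma;\rho)$ with $P$ the projection onto the first summand.

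For existence and uniqueness, set $z:=P(R^{-1}\mathbf V)\in Z_1(\Gamma;\rho)$. Then $R^{-1}\mathbf V-z\in\ker P=R^{-1}B^1(\Gamma;\rho)$, so $\mathbf V-Rz\in B^1(\Gamma;\rho)$. Conversely, if $z'\in Z_1(\Gamma;\rho)$ and $\mathbf V-Rz'\in B^1(\Gamma;\rho)$, then $R^{-1}\mathbf V-z'\in R^{-1}B^1(\Gamma;\rho)$, so $z'$ is the $Z_1(\Gamma;\rho)$-component of $R^{-1}\mathbf V$ in the above decomposition, forcing $z'=P(R^{-1}\mathbf V)=z$. (Equivalently: $RZ_1(\Gamma;\rho)\cap B^1(\Gamma;\rho)=0$ since $\langle Rz,z\rangle=\langle z,z\rangle_R>0$ for $z\neq0$, and counting dimensions gives $C_1(\Gamma;\rho)=RZ_1(\Gamma;\rho)\oplus B^1(\Gamma;\rho)$; the condition pins down $Rz$, hence $z$, uniquely.)

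For the edge formula, fix $b\in\Gamma_1$ and compute, using $\langle z,b\rangle_R=r_b\langle z,b\rangle$, then self-adjointness of $P$ for $\langle-,-\rangle_R$ together with the reality of the $w_T$ and of $\Delta$, and finally $\langle-,-\rangle_R=\langle R(-),-\rangle$:
\begin{align*}
\langle z,b\rangle &= \tfrac1{r_b}\,\langle P(R^{-1}\mathbf V),\,b\rangle_R = \tfrac1{r_b}\,\langle R^{-1}\mathbf V,\,Pb\rangle_R \\
&= \tfrac1{r_b\Delta}\sum_T w_T\,\langle R^{-1}\mathbf V,\,\bar T(b)\rangle_R = \tfrac1{\Delta}\sum_T \tfrac{w_T}{r_b}\,\langle \mathbf V,\,\bar T(b)\rangle,
\end{align*}
where the last equality uses $\langle R^{-1}\mathbf V,\bar T(b)\rangle_R=\langle RR^{-1}\mathbf V,\bar T(b)\rangle=\langle\mathbf V,\bar T(b)\rangle$. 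This is the asserted expression.

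I do not expect a genuinely hard step: once Theorem~\ref{bigthm:twisted-kirchhoff} is granted, everything is finite-dimensional linear algebra. The two points requiring care are, first, keeping the standard and modified inner products apart — $B^1(\Gamma;\rho)$ is standard-orthogonal whereas the projection $P$ of Theorem~\ref{bigthm:twisted-kirchhoff} is modified-orthogonal — and correctly identifying the modified-orthogonal complement of $Z_1(\Gamma;\rho)$ with $R^{-1}B^1(\Gamma;\rho)$; and second, in the last computation, applying self-adjointness to the whole operator $P$ and only afterwards expanding $P=\tfrac1\Delta\sum_T w_T\bar T$, since the individual operators $\bar T$ need not be self-adjoint.
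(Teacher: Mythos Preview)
Your proof is correct and follows essentially the same approach as the paper: define $z$ as the $\langle-,-\rangle_R$-orthogonal projection of $R^{-1}\mathbf V$ onto $Z_1(\Gamma;\rho)$, translate between the two inner products via $\langle x,y\rangle_R=\langle Rx,y\rangle$ to obtain existence and uniqueness, and then compute $\langle z,b\rangle$ by the same chain of equalities using self-adjointness of $P$ in the modified inner product. Your explicit identification $\ker P=R^{-1}B^1(\Gamma;\rho)$ and the accompanying direct-sum decomposition are a slightly more detailed packaging of what the paper does in one line, but the argument is the same.
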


\begin{rem}  In the untwisted case ($\rho = 1$), this is the formulation
of Kirchhoff's network theorem that is found in \cite{NS}. The expression  
$\langle\mathbf V,b\rangle$ is called the voltage source on the branch
$b$ and $\langle z,b\rangle$ is the current residing on $b$.
\end{rem}

Let 
\[
\partial^*_R\: C_0(\Gamma;\rho) \to C_1(\Gamma;\rho)
\]
be the formal adjoint to the boundary operator $\partial\: C_1(\Gamma;\rho) 
\to C_0(\Gamma;\rho)$ with respect to the standard hermitian inner product on 
$C_0(\Gamma;\rho)$ and the modified one on $C_1(\Gamma;\rho)$ as determined
by the resistance operator $R$.

The following is the result of Forman \cite[eq.~(1)]{Forman} that was alluded to above.

\begin{bigthm}[Twisted Weighted Matrix-Tree Theorem] \label{bigthm:twmtt}  
\[
\det (\partial \partial^*_R\:C_0(\Gamma;\rho) \to C_0(\Gamma;\rho))  = \sum_{T} w_T \, ,
\] 
where $T$ ranges over all $\rho$-spanning trees, and $w_T$ is as in Definition \ref{weights}.
\end{bigthm}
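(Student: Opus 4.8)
The plan is to derive the formula from the twisted projection formula (Theorem~\ref{bigthm:twisted-kirchhoff}), following the strategy used for the untwisted case in \cite{CCK}.

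Since $H_0(\Gamma;\rho)=0$, the boundary $\partial\: C_1(\Gamma;\rho)\to C_0(\Gamma;\rho)$ is surjective, so $\partial\partial^{*}_R$ is positive definite hermitian (in particular $\det(\partial\partial^{*}_R)$ is a positive real number), the two–term complex $0\to H_1(\Gamma;\rho)\to C_1(\Gamma;\rho)\xrightarrow{\partial}C_0(\Gamma;\rho)\to 0$ is exact, and the $\langle\,,\,\rangle_R$-orthogonal projection onto the cycles is $P=\mathrm{id}-\partial^{*}_R(\partial\partial^{*}_R)^{-1}\partial$. First I would fix one $\rho$-spanning tree $S$. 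Then $\partial$ restricts to an isomorphism $\partial_S\:C_1(S;\rho)\xrightarrow{\ \sim\ }C_0(\Gamma;\rho)$, the set $\{\bar S(b)\}_{b\in\Gamma_1\setminus S_1}$ is a basis of $H_1(\Gamma;\rho)$, and I write $G=\bigl(\langle\bar S(b),\bar S(b')\rangle_R\bigr)_{b,b'\in\Gamma_1\setminus S_1}$ for its Gram matrix. Computing the multiplicative torsion of the exact sequence above with respect to the standard bases of $C_0(\Gamma;\rho),C_1(\Gamma;\rho)$ and this homology basis, and comparing with its value in $\langle\,,\,\rangle_R$-orthonormal bases, yields the identity
\[
\det(\partial\partial^{*}_R)\;=\;\frac{\,|\det\partial_S|^{2}\,\det G\,}{\prod_{b\in\Gamma_1}r_b}\, .
\]

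Two facts finish the proof. The first is the theorem in the special case that $\Gamma$ is itself a $\rho$-spanning tree: $|\det\partial_S|^{2}=\hat\rho_S$. This is a direct cofactor expansion of the twisted incidence matrix — a subgraph with trivial $H_1(\,\cdot\,;\rho)$ and with $H_0(\,\cdot\,;\rho)\cong H_0(\Gamma;\rho)$ is precisely one each of whose components has a single circuit with non-trivial holonomy, and expanding the determinant along an ordinary spanning tree inside each component shows it equals a unit complex number times $\prod_\alpha(\rho_{C_\alpha}-1)$. Granting this and the elementary identity $\prod_{b\in\Gamma_1}r_b\cdot w_T=\hat\rho_T\prod_{b\in\Gamma_1\setminus T_1}r_b$, the display above reduces the theorem to the ``cycle–space'' matrix–tree identity
\[
\hat\rho_S\cdot\det G\;=\;\sum_{T}\hat\rho_{T}\!\!\!\prod_{b\in\Gamma_1\setminus T_1}\!\!\! r_b\, ,
\]
the sum over $\rho$-spanning trees. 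This is where Theorem~\ref{bigthm:twisted-kirchhoff} is used: comparing the formula $P=\tfrac1\Delta\sum_T w_T\bar T$ with the standard expression for the $\langle\,,\,\rangle_R$-orthogonal projection onto $H_1(\Gamma;\rho)$ in the basis $\{\bar S(b)\}$ — and using that a cycle's coordinate in that basis is simply its $b$-th edge coordinate — identifies the inverse Gram matrix explicitly, $(G^{-1})_{b\,b''}=\tfrac1{\Delta\,r_{b''}}\sum_{T}w_T\,\langle\bar T(b''),b\rangle$ for $b,b''\in\Gamma_1\setminus S_1$. Taking determinants, expanding over the non-tree edges, and invoking the first fact applied to subgraphs to discard the terms indexed by edge–sets that are not the complement of a $\rho$-spanning tree, collapses the expression to the asserted sum, which combined with the earlier steps gives $\det(\partial\partial^{*}_R)=\sum_T w_T$.

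The main obstacle is this last collapse: showing that after the expansion exactly the $\rho$-spanning-tree terms survive, each with the correct holonomy weight $\hat\rho_T$. This rests on the one-tree identity together with its complementary-minor counterpart for the cycle matrix, and on tracking carefully how the factors $\hat\rho_{(-)}$ transform as one passes between different spanning structures (deleting tree edges and adjoining non-tree edges). The role of the projection formula is precisely to hand one the inverse Gram matrix in closed form, so that this determinant can be evaluated without appealing to the Cauchy–Binet theorem (the route of \cite{Kenyon}).
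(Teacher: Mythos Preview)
Your overall architecture — a torsion identity $\det(\partial\partial^{*}_R)=\dfrac{|\det\partial_S|^{2}\det G}{\prod_{b}r_b}$, the one-tree computation $|\det\partial_S|^{2}=\hat\rho_S$, and then an evaluation of $\det G$ via Theorem~\ref{bigthm:twisted-kirchhoff} — is coherent, and the second ingredient is exactly the content of the paper's final lemma (proved there by gauge invariance). But the last step, which you yourself flag as ``the main obstacle,'' is not carried out and does not collapse in the way you suggest. Writing $(G^{-1})_{bb''}=\tfrac{1}{\Delta r_{b''}}\sum_T w_T\langle\bar T(b''),b\rangle$ and expanding the determinant by multilinearity in the columns produces a sum over $k$-tuples $(T_1,\dots,T_k)$ of $\rho$-spanning trees, one per column, not a sum over single trees; the needed identity is $\det G^{-1}=\hat\rho_S/(\Delta\prod_b r_b)$, and extracting a factor of $\Delta^{k-1}$ from that $k$-fold sum is a genuine cancellation argument you have not supplied. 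Invoking the one-tree identity on subgraphs does not by itself organize these cross-terms, and the ``complementary-minor counterpart'' you allude to is essentially the dual Cauchy--Binet statement — precisely the combinatorial input you said you wanted to avoid.

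The paper proceeds quite differently and never computes $\det G$. It first shows (citing the mechanism of \cite[prop.~4.2]{CCK}, which does use Theorem~\ref{bigthm:twisted-kirchhoff}) that $\det(\partial\partial^{*}_R)=\gamma\sum_T w_T$ with a prefactor $\gamma$ \emph{independent of $R$}. Then it determines $\gamma$ by a low-temperature/perturbative limit: for a fixed $\rho$-spanning tree $T$ one replaces $R$ by $R_\beta=e^{\beta W}$ with $W$ chosen so that, as $\beta\to\infty$, the full Laplacian is dominated by the tree Laplacian $\mathcal L^T_R=\partial_T e^{-W}\partial_T^{*}$. Taking the limit forces $\det(\mathcal L^T_R)=\gamma\,w_T$, and since $\det e^{-W}=\hat\rho_T^{-1}w_T$ this reduces to the single-tree identity $\det(\partial_T\partial_T^{*})=\hat\rho_T$, whence $\gamma=1$. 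The point is that the asymptotic argument isolates one term of the sum and thereby sidesteps exactly the determinant expansion you are stuck on. If you want to push your route through, you would need to supply an honest proof of the ``cycle-space'' identity $\hat\rho_S\det G=\sum_T\hat\rho_T\prod_{b\notin T_1}r_b$; as written, the proposal stops short of this.
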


The case $R = 1$ is worth singling out. We use 
the notation $\partial^* = \partial^*_R$ in this case.

\begin{bigcor}[Twisted Matrix-Tree Theorem] 
\[
\det (\partial \partial^*)  = \sum_{T} \hat \rho_T \, ,
\]
where $T$ ranges over all $\rho$-spanning trees.
\end{bigcor}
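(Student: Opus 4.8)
The plan is to obtain this corollary as the $R = 1$ case of Theorem \ref{bigthm:twmtt}, so the only thing to do is to check that all the ingredients of that theorem degenerate in the expected way. Setting the resistance function to $r_b \equiv 1$ makes the resistance operator $R$ the identity, so the modified inner product $\langle\,\cdot\,,\,\cdot\,\rangle_R$ on $C_1(\Gamma;\rho)$ coincides with the standard hermitian inner product; consequently the formal adjoint $\partial^*_R$ is precisely the operator denoted $\partial^*$ in the statement --- indeed that is how $\partial^*$ was defined. In Definition \ref{weights} the factor $\prod_{b \in T_1} r_b^{-1}$ is then equal to $1$, so $w_T = \hat\rho_T$ for every $\rho$-spanning tree $T$; and since the notion of $\rho$-spanning tree makes no reference to $R$, both sums range over the same finite set. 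Substituting these identifications into the conclusion of Theorem \ref{bigthm:twmtt} yields $\det(\partial\partial^*) = \sum_T w_T = \sum_T \hat\rho_T$, which is the assertion. I do not expect any genuine obstacle here; the only point that deserves a line of justification is the identification $\partial^*_R = \partial^*$ in the case $R = 1$, which is immediate from the definition of the formal adjoint once one observes that the modified and standard inner products agree.

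If one wished to avoid citing Theorem \ref{bigthm:twmtt}, I would instead deduce the corollary directly from the Twisted Projection Formula (Theorem \ref{bigthm:twisted-kirchhoff}) together with the standing Assumption that $H_0(\Gamma;\rho) = 0$. Under that assumption $\partial\: C_1(\Gamma;\rho) \to C_0(\Gamma;\rho)$ is surjective with kernel $H_1(\Gamma;\rho)$, so $\partial^*$ maps $C_0(\Gamma;\rho)$ isomorphically onto the orthogonal complement $B^1(\Gamma;\rho)$ of $H_1(\Gamma;\rho)$, and $\partial$ restricts to an isomorphism from $B^1(\Gamma;\rho)$ onto $C_0(\Gamma;\rho)$. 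Then $\partial\partial^*$ is positive definite, so $\det(\partial\partial^*)$ is a positive real number, and the plan would be to expand this determinant over the $\rho$-spanning trees of $\Gamma$ and recognize the outcome as $\Delta = \sum_T \hat\rho_T$ --- the same normalization constant that sits in front of the projection formula. The delicate step in this route is the combinatorial bookkeeping of holonomies and orientations: one has to verify that the cross terms in the expansion reassemble into the manifestly real quantities $\hat\rho_T = \prod_\alpha (2 - \rho_{C_\alpha} - \rho^\ast_{C_\alpha})$ attached (as in the definition of $\hat\rho_T$) to the circuits of the components of $T$, rather than into the individual, non-real holonomy factors $\rho_{C_\alpha}$. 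For the corollary as stated, however, this subtlety is already packaged inside the proof of Theorem \ref{bigthm:twmtt}, so the one-line specialization above suffices.
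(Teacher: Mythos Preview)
Your proposal is correct and matches the paper's approach exactly: the corollary is presented in the paper simply as the specialization of Theorem~\ref{bigthm:twmtt} to $R=1$, with the remark that $\partial^* = \partial^*_R$ in this case, and your first paragraph carries out precisely this check.
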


\begin{rem} A natural question is whether versions of Theorems 
\ref{bigthm:twisted-kirchhoff} and \ref{bigthm:twmtt} exist
for higher rank bundles on $\Gamma$.  We don't think this
is likely, 
since our approach relies heavily
on the fact that $U(1)$ is abelian.
\end{rem}

\begin{out} In \S~\ref{sec:spanning} we develop foundational
material on $\rho$-spanning trees. \S~\ref{section:network_proof}
contains the proofs of Theorem \ref{bigthm:twisted-kirchhoff} and
Corollary \ref{cor:twisted_Kirchhoff_formula}. In \S~\ref{sec:matrix-tree}, we prove Theorem \ref{bigthm:twmtt} using Theorem \ref{bigthm:twisted-kirchhoff} and the low temperature limit
argument of \cite{CCK}.
\end{out}

\begin{acks} The authors wish to Nikolai Sinitsyn for discussions related to
the physical interpretation of the twisted network theorem. We are also indebted
Misha Chertkov for hosting the first author and
to Andrei Piryatinski for his unlimited hospitality.
We thank the Los Alamos Center for Nonlinear Studies and the T-4 division for partially supporting this research. This material is based upon work supported by the National
Science Foundation under Grant Nos.\ CHE-1111350 
and DMS-1104355. 
\end{acks}

\section{Properties of twisted spanning trees \label{sec:spanning}}

The following lemma characterizes the structure of $\rho$-spanning trees of $\Gamma$. We remind the reader we have made the assumption throughout that 
$H_0(\Gamma;\rho) = 0$.

\begin{lem} \label{lem:characterizeT}  A subcomplex 
$T \subset \Gamma$
is a $\rho$-spanning tree  if and only if 
\begin{itemize}
\item $T_0 = \Gamma_0$, 
\item each connected component
$T^\alpha$ of $T$ has trivial Euler characteristic, i.e., $T^\alpha$ possesses a unique 
circuit, $C_\alpha$ and
\item the holonomy around $C_\alpha$ is non-trivial.
\end{itemize}
\end{lem}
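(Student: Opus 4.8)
The plan is to analyze a subcomplex $T \subset \Gamma$ with $T_0 = \Gamma_0$ by decomposing it into its connected components and computing twisted homology component by component. Since $C_\ast(T;\rho) = \bigoplus_\alpha C_\ast(T^\alpha;\rho)$, we have $H_\ast(T;\rho) = \bigoplus_\alpha H_\ast(T^\alpha;\rho)$, so the conditions $H_1(T;\rho)=0$ and $H_0(T;\rho) \xrightarrow{\cong} H_0(\Gamma;\rho) = 0$ are each equivalent to the corresponding statement for every component $T^\alpha$ individually (using our standing assumption that $H_0(\Gamma;\rho)=0$, the third bullet in the definition of $\rho$-spanning tree reduces to $H_0(T;\rho)=0$). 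Thus the lemma reduces to a purely local statement: for a \emph{connected} finite graph $A$ equipped with a line bundle $\rho$, one has $H_0(A;\rho)=0$ and $H_1(A;\rho)=0$ if and only if $\chi(A)=0$ (equivalently $A$ has a unique circuit $C$) and the holonomy $\rho_C$ is non-trivial.

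For the local statement I would argue as follows. The twisted boundary operator $\partial\colon C_1(A;\rho)\to C_0(A;\rho)$ is a map between complex vector spaces of dimensions $|A_1|$ and $|A_0|$, and the Euler characteristic of the two-term complex gives $\dim H_1(A;\rho) - \dim H_0(A;\rho) = |A_1| - |A_0| = -\chi(A)$. Hence $H_0(A;\rho)=H_1(A;\rho)=0$ forces $\chi(A)=0$. Conversely, suppose $\chi(A)=0$; since $A$ is connected, collapsing a spanning tree of the underlying graph shows $A$ is homotopy equivalent (after a chain homotopy compatible with the bundle, using the subdivision invariance / deformation to a wedge) to a single loop carrying holonomy $\rho_C$, where $C$ is the unique circuit of $A$. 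Concretely, choose a genuine spanning tree $T_0 \subset A$ (in the classical sense); then $A = T_0 \cup b$ for a single extra edge $b$. One computes directly: $C_0(T_0;\rho)$ has the twisted boundary $\partial$ restricting to an isomorphism onto... — more precisely, since $T_0$ is a tree, $\partial\colon C_1(T_0;\rho)\to C_0(T_0;\rho)$ is injective with cokernel of dimension $|A_0|-|T_{0,1}| = 1$, and one shows the composite $C_0(T_0;\rho)\to C_0(T_0;\rho)/\operatorname{im}\partial$ is computed by the holonomy. Adding back the edge $b$, the class $[\partial b]$ in this one-dimensional cokernel is zero precisely when $\rho_C = 1$: if $\rho_C \neq 1$ then $\partial b$ is in the image of $\partial|_{C_1(T_0;\rho)}$, giving $H_0(A;\rho)=0$, and then the rank count forces $H_1(A;\rho)=0$ as well; if $\rho_C=1$ then $H_0(A;\rho)\cong \C \neq 0$ (the trivial-holonomy loop behaves like the untwisted circle, with a nonzero cycle and nonzero $H_0$).

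The step I expect to be the main obstacle is pinning down the claim that $[\partial b]$ vanishes in $H_0(T_0;\rho)$ \emph{if and only if} $\rho_C = 1$, i.e., identifying the obstruction with the holonomy. The clean way to see this is to trace a path in $T_0$ from $d_0(b)$ to $d_1(b)$: the difference $\partial b - (\text{telescoping sum along the path})$ is a boundary, and evaluating shows $\partial b \equiv (\rho_C - 1)\cdot(\text{generator of the cokernel})$ up to a unit, where $\rho_C$ is exactly the product of the edge-holonomies around the unique circuit $C = (\text{path}) \cup b$. This is essentially the remark in the excerpt computing $\bar T$ via "$\partial b$ bounds a chain $u \in C_1(T;\rho)$", and the well-definedness of $\hat\rho_A = \prod_\alpha(2-\rho_{C_\alpha}-\rho^\ast_{C_\alpha})$; once this identification is in hand the rest is bookkeeping with ranks and the direct-sum decomposition over components.
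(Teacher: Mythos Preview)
Your argument is correct in outline and matches the paper's proof closely: both reduce to connected components, use the dimension count $\dim H_1-\dim H_0=-\chi$ to force $\chi(T^\alpha)=0$, and then identify the obstruction to acyclicity as $\rho_{C_\alpha}-1$. The paper phrases that last step via subdivision invariance---collapsing $T^\alpha$ to its circuit and then to a one-vertex circle with holonomy $\rho_\alpha$, so that $H_0(T^\alpha;\rho)\cong\operatorname{coker}\bigl((\rho_\alpha-1)\cdot\bigr)$---whereas you compute it by hand using a classical spanning tree $T_0\subset A$ and path-tracing; these are the same calculation in different packaging.

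One slip to fix: in the clause ``if $\rho_C\neq 1$ then $\partial b$ is in the image of $\partial|_{C_1(T_0;\rho)}$'' you have it backwards. When $\rho_C\neq 1$ the class $[\partial b]$ is \emph{nonzero} in the one-dimensional cokernel $H_0(T_0;\rho)$, so $\partial b$ is \emph{not} in $\operatorname{im}\partial|_{C_1(T_0;\rho)}$; it is precisely this that makes $\partial\colon C_1(A;\rho)\to C_0(A;\rho)$ surjective once $b$ is adjoined, hence $H_0(A;\rho)=0$. Your final paragraph gets this right (``$\partial b\equiv(\rho_C-1)\cdot(\text{generator})$''), so the error is only in that one sentence and the surrounding logic is sound.
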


\begin{rem} \label{rem:CSRF} A {\it cycle-rooted spanning forest (CRSF)} is a subcomplex $T$ of $\Gamma$
satisfying the first two conditions listed in Lemma \ref{lem:characterizeT}, i.e.,
a $\rho$-spanning tree is a CRSF additionally satisfying the condition that
the holonomy around circuits is non-trivial
(cf. Fig.~\ref{fig:CSRF}, \cite[4.1]{Kenyon}).
\end{rem}

\begin{figure}
\includegraphics[scale=.5]{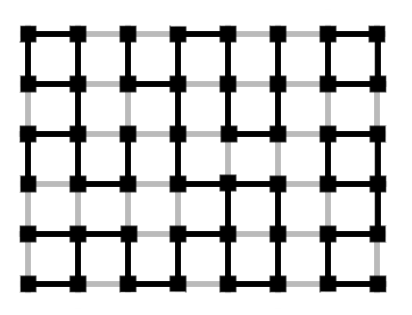}
\captionsetup{name=Fig.\!}
\caption{ \label{fig:CSRF} A lattice graph equipped with CRSF having four components (cf. Remark \ref{rem:CSRF}).
The edges of the graph are the gray lines. The edges of the CRSF
are indicated in black.}
\label{spanning_tree_graphic}
\end{figure}

\begin{proof}[Proof of Lemma \ref{lem:characterizeT}]  Assume $T$ is a $\rho$-spanning tree. 
The assumption $H_0(\Gamma;\rho) = 0$ implies that
$H_0(T;\rho) = 0$ and the latter implies $H_0(T^\alpha;\rho) = 0$ since
$H_0(T;\rho) = \oplus_\alpha H_0(T_\alpha;\rho)$. Similarly, 
$H_1(T;\rho) = 0$ implies $H_1(T^\alpha;\rho) = 0$. Hence the chain complex
\[
\partial\: C_1(T^\alpha;\rho) \to C_0(T^\alpha;\rho)
\]
is acyclic. In particular, the number of edges of  $T^\alpha$ 
equals the number of vertices, so the Euler characteristic of $T^\alpha$ is trivial.
Orient the unique circuit $C_\alpha$ and let the holonomy around
$C_\alpha$ be denoted $\rho_\alpha$. 
Then independence of twisted cohomology with respect to subdivision yields
$H_0(T^\alpha;\rho)  = H_0(C_\alpha;\rho) = H_0(S^1;\rho_\alpha)$, where we
are thinking of $S^1$ as a graph with one vertex and one edge and where the line
bundle is given by $\rho_{\alpha} $. An easy calculation shows $H_0(S^1;\rho_C)$
is the cokernel of the map $(\rho_\alpha - 1)\cdot\: \Bbb C \to \Bbb C$.  Hence
the triviality of $H_\ast(T^\alpha;\rho_\alpha)$ is equivalent to the statement $\rho_\alpha \ne 1$.

Conversely, given $T$ satisfying the three conditions, the second and
third conditions imply  $H_*(T;\rho) = \oplus_\alpha H_\ast(T^\alpha;\rho)$ is trivial.
Hence $T$ is a $\rho$-spanning tree.
\end{proof}

\begin{lem} $\Gamma$ has a $\rho$-spanning tree.
\end{lem}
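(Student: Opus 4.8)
The plan is to exhibit a $\rho$-spanning tree explicitly by starting from an ordinary spanning tree and correcting it so that each component acquires a circuit with non-trivial holonomy; the characterization in Lemma \ref{lem:characterizeT} then certifies the result. First I would pick an ordinary spanning tree $T_0 \subset \Gamma$ in the usual sense, so $(T_0)_0 = \Gamma_0$ and $T_0$ is connected and simply connected (as a topological space). Since $\Gamma$ has a circuit with non-trivial holonomy — this is exactly the hypothesis $H_0(\Gamma;\rho) = 0$, via the Remark following the Assumption — there exists an edge $b \in \Gamma_1 \setminus (T_0)_1$ such that the unique circuit $C$ of $T_0 \cup b$ has $\rho_C \neq 1$. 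The candidate is then $T := T_0 \cup b$: it satisfies $T_0 = \Gamma_0$, its single component has trivial Euler characteristic (we added one edge to a tree on all the vertices), and its unique circuit has non-trivial holonomy, so Lemma \ref{lem:characterizeT} applies.

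The one point needing care is the claim that among all edges $b \notin (T_0)_1$ there is at least one whose fundamental circuit in $T_0 \cup b$ has non-trivial holonomy. The fundamental circuits $\{C_b : b \in \Gamma_1 \setminus (T_0)_1\}$ form a basis for $H_1(\Gamma;\mathbb{C})$ (untwisted), equivalently for $H_1(\Gamma;\mathbb{Z})$; so if every such fundamental circuit had trivial holonomy, then $\rho$ would be trivial on a generating set of $\pi_1(\Gamma)$, hence have trivial holonomy around \emph{every} circuit, forcing $H_0(\Gamma;\rho) \neq 0$ by Lemma \ref{lem:characterizeT} applied with $A = \Gamma$ (or directly by the Remark after the Assumption). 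This contradicts our standing hypothesis, so such a $b$ exists.

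I expect this homological bookkeeping — identifying fundamental circuits with a basis of $H_1$ and translating "holonomy trivial on a basis" into "holonomy trivial everywhere" — to be the only real content; everything else is an immediate appeal to Lemma \ref{lem:characterizeT}. An alternative, slicker route avoiding even this: apply Lemma \ref{lem:characterizeT} in the degenerate form with $A = \Gamma$, namely the fact (recorded in the Remark after the Assumption) that $H_0(\Gamma;\rho) = 0$ means some circuit of $\Gamma$ has non-trivial holonomy; fix such a circuit $C$, extend $C$ to a subgraph $T$ by adjoining edges of a spanning tree of $\Gamma$ without creating new circuits until $T_0 = \Gamma_0$. Then $T$ is connected with a unique circuit $C$ of non-trivial holonomy, so $T$ is a $\rho$-spanning tree by Lemma \ref{lem:characterizeT}.
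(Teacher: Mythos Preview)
Your proof is correct and takes a genuinely different route from the paper's. The paper argues top-down: it calls an edge $b$ \emph{essential} if some twisted cycle has nonzero coefficient on $b$, removes such an edge to strictly drop $\dim_{\C} H_1(\,\cdot\,;\rho)$ while keeping $H_0(\,\cdot\,;\rho)=0$, and iterates until no essential edges remain; the resulting subcomplex has $H_\ast(\,\cdot\,;\rho)=0$ and is therefore a $\rho$-spanning tree. Your argument is bottom-up and more elementary: you start from a classical spanning tree $T_0$, use the homomorphism $H_1(\Gamma;\Z)\to U(1)$ given by holonomy (together with the fact that fundamental circuits generate $H_1$) to locate an edge $b\notin (T_0)_1$ whose fundamental circuit has nontrivial holonomy, and invoke Lemma~\ref{lem:characterizeT} for $T_0\cup b$. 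Your construction always produces a \emph{connected} $\rho$-spanning tree (a single unicycle spanning all vertices), whereas the paper's deletion procedure need not; on the other hand, the paper's argument lives entirely in the twisted chain complex and does not appeal to the untwisted theory, which is more in keeping with the ambient framework and closer to what one would do in the higher-dimensional setting of \cite{CCK}. One small remark: your appeal to ``Lemma~\ref{lem:characterizeT} applied with $A=\Gamma$'' to conclude $H_0(\Gamma;\rho)\ne 0$ when all holonomies are trivial is a slight overreach, since that lemma concerns subcomplexes whose components each have Euler characteristic zero; the parenthetical fallback to the Remark after the Assumption (or the direct observation that trivial holonomy makes $\rho$ gauge-equivalent to the trivial bundle, whence $H_0\cong\C$) is the clean justification.
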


\begin{proof}  Call an edge $b$ of $\Gamma$ {\it essential} if there is a
cycle $z \in H_1(\Gamma;\rho) \subset C_1(\Gamma;\rho)$ such that $\langle b,z\rangle \ne 0$.
If there is no such edge, then it is straightforward to check that $\Gamma$ is
a $\rho$-spanning tree.

Assume then that there is an essential edge $b$. Let $Y$ be the effect
of removing (the interior of) $b$ from $\Gamma$. Then we have a short exact sequence
\[
0 \to H_1(Y;\rho) \to H_1(\Gamma;\rho) \to H_1(b,\partial b;\rho) \to 
H_0(Y;\rho) \to 0
\]
and the condition $\langle b,z\rangle \ne 0$ implies that the homomorphism
$H_1(\Gamma;\rho) \to H_1(b,\partial b;\rho)$ is non-trivial 
(note that $H_1(b,\partial b;\rho) \cong \C$). It follows that $\dim_{\C} H_1(Y;\rho) < \dim_{\C} H_1(\Gamma;\rho)$
and $H_0(Y;\rho) = 0$. We do not require that $Y$ be connected.
We now replace $\Gamma$ by $Y$ and iterate this construction until we obtain
a subcomplex $T$ having no essential cells and $H_\ast(T;\rho) = 0$. Then $T$ is
a $\rho$-spanning tree.
\end{proof} 

\begin{lem}\label{lem:basis} Fix a $\rho$-spanning tree $T$ and let $b_1,\dots,b_k$ be the
set of edges of $\Gamma_1 \setminus T_1$. Then $\{\bar T(b_1),\ldots,\bar T(b_k)\}$
is a basis for $H_1(\Gamma;\rho)$.
\end{lem}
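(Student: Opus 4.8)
The plan is to show that the $k$ vectors $\bar T(b_1),\dots,\bar T(b_k)$ are linearly independent and that $k = \dim_{\C} H_1(\Gamma;\rho)$; since they all lie in $H_1(\Gamma;\rho)$ by construction, this suffices. For the dimension count, recall that for a $\rho$-spanning tree $T$ we have $H_\ast(T;\rho)=0$, so the Euler characteristic argument of Lemma~\ref{lem:characterizeT} gives that $T$ has exactly $|\Gamma_0|$ edges. Combined with the assumption $H_0(\Gamma;\rho)=0$, which forces $H_0$ of the two-term complex $\partial\colon C_1(\Gamma;\rho)\to C_0(\Gamma;\rho)$ to vanish, a rank computation yields $\dim_{\C} H_1(\Gamma;\rho) = |\Gamma_1| - |\Gamma_0| = k$. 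So the problem reduces entirely to linear independence.

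For independence I would use the inner product pairings $t_b = \langle \bar T(b), b\rangle$. Recall from the remark following the definition of $\bar T$ that, using the alternative description, one may normalize so that $\langle \bar T(b_i), b_i\rangle = 1$. The key observation is that $\bar T(b_i)$ is supported on $T_1 \cup \{b_i\}$: it is of the form $b_i - u_i$ with $u_i \in C_1(T;\rho)$. Hence $\langle \bar T(b_i), b_j\rangle = \delta_{ij}$ whenever $b_i, b_j \in \Gamma_1 \setminus T_1$, because the only edge outside $T$ appearing in $\bar T(b_i)$ is $b_i$ itself. Now suppose $\sum_i \lambda_i \bar T(b_i) = 0$. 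Pairing with $b_j$ gives $\lambda_j = 0$ for each $j$, so the vectors are linearly independent.

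The main obstacle — really the only nontrivial point — is making the dimension count airtight, i.e.\ confirming that $k = \dim_{\C} H_1(\Gamma;\rho)$ rather than merely $k \geq \dim_{\C} H_1(\Gamma;\rho)$. This follows from the exactness / rank-nullity applied to $\partial\colon C_1(\Gamma;\rho)\to C_0(\Gamma;\rho)$: the image has dimension $|\Gamma_0| - \dim_{\C} H_0(\Gamma;\rho) = |\Gamma_0|$, hence the kernel $H_1(\Gamma;\rho)$ has dimension $|\Gamma_1| - |\Gamma_0| = |\Gamma_1 \setminus T_1| = k$, where the penultimate equality uses that $T$ has $|\Gamma_0|$ edges. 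An alternative, perhaps cleaner, route avoids the global count: one can instead induct, adding the edges $b_1,\dots,b_k$ back to $T$ one at a time, using the short exact sequence from the proof that a $\rho$-spanning tree exists (with $Y = T\cup\{b_1,\dots,b_{i-1}\}$) to see that each new $\bar T(b_i)$ increases $\dim_{\C} H_1$ by exactly one and spans the new direction; this simultaneously establishes independence and spanning. I would present the inner-product argument as the main line since it is the most transparent, and invoke the rank computation for the dimension.
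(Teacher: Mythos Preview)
Your proof is correct. The independence argument via $\langle \bar T(b_i), b_j\rangle = \delta_{ij}$ is sound, and the dimension count via rank--nullity (using $|T_1| = |\Gamma_0|$ and $H_0(\Gamma;\rho)=0$) is valid.

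The paper takes a slightly more streamlined route: it observes that, since $H_\ast(T;\rho)=0$, the long exact sequence of the pair $(\Gamma,T)$ makes the quotient map $H_1(\Gamma;\rho)\to H_1(\Gamma,T;\rho)$ an isomorphism; and since $T_0=\Gamma_0$ forces $C_0(\Gamma,T;\rho)=0$, one has $H_1(\Gamma,T;\rho)=C_1(\Gamma,T;\rho)$ with its evident basis $\{b_1,\dots,b_k\}$. The inverse isomorphism then visibly sends $b_i\mapsto \bar T(b_i)$. This packages your two separate checks (independence and dimension) into a single isomorphism, and in fact your pairing computation $\langle \bar T(b_i),b_j\rangle=\delta_{ij}$ is exactly the statement that the quotient map $H_1(\Gamma;\rho)\to C_1(\Gamma,T;\rho)$ carries $\bar T(b_i)$ to $b_i$. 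So the two arguments are the same idea in different dress: yours is more hands-on linear algebra, the paper's is the relative-homology repackaging.
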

 
 \begin{proof} The homomorphism $H_1(\Gamma;\rho) \to H_1(\Gamma,T;\rho)$
an isomorphism. Furthermore, $H_1(\Gamma,T;\rho) = C_1(\Gamma;T;\rho)$
 has basis $\{b_1,\ldots,b_k\}$. The inverse homomorphism sends $b_i$ to $\bar T(b_i)$.
 \end{proof}
 
 \begin{cor} \label{for:idempotent} For any $z\in H_1(\Gamma;\rho)$, 
 we have $\bar T(z) = z$.
 \end{cor}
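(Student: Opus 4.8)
The plan is to deduce this directly from Lemma \ref{lem:basis} and the alternative description of $\bar T$ recorded in the Remark following its definition. The key observation is that $\bar T\colon C_1(\Gamma;\rho)\to H_1(\Gamma;\rho)$ is linear and carries the subspace $H_1(\Gamma;\rho)\subset C_1(\Gamma;\rho)$ into itself, so its restriction is an endomorphism of $H_1(\Gamma;\rho)$; to check that this restriction is the identity it suffices to check equality on a basis. By Lemma \ref{lem:basis} the vectors $\bar T(b_1),\dots,\bar T(b_k)$ form such a basis, where $b_1,\dots,b_k$ enumerate $\Gamma_1\setminus T_1$, so the whole statement reduces to verifying $\bar T(\bar T(b_i))=\bar T(b_i)$ for each $i$.

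To carry this out I would use the alternative formula $\bar T(b_i)=b_i-u_i$, where $u_i\in C_1(T;\rho)$ is a chain supported on the tree with $\partial u_i=\partial b_i$ (this is exactly the content of the Remark, valid because $H_0(T;\rho)=0$). Applying $\bar T$ and using linearity gives $\bar T(\bar T(b_i))=\bar T(b_i)-\bar T(u_i)$. Since $\bar T$ vanishes on every edge of $T_1$ by definition, and $C_1(T;\rho)$ is spanned by those edges, linearity forces $\bar T$ to vanish on all of $C_1(T;\rho)$; in particular $\bar T(u_i)=0$. Hence $\bar T(\bar T(b_i))=\bar T(b_i)$, completing the argument.

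If one prefers to avoid choosing a basis, essentially the same reasoning works coordinatewise: for $z\in H_1(\Gamma;\rho)$ write $z=\sum_{b\in\Gamma_1}\langle z,b\rangle b$, so that $\bar T(z)=\sum_{b\notin T_1}\langle z,b\rangle\,\bar T(b)$; using $\langle\bar T(b),b'\rangle=\delta_{bb'}$ for $b,b'\notin T_1$ (immediate from $\bar T(b)=b-u_b$ with $u_b\in C_1(T;\rho)$) one sees that $z-\bar T(z)$ is a cycle all of whose coefficients on the non-tree edges vanish, hence a cycle supported on $T$, i.e.\ an element of $H_1(T;\rho)=0$. I do not expect a genuine obstacle here, since the content is entirely formal; the only point deserving a word of care is the passage from ``$\bar T$ kills the tree edges'' to ``$\bar T$ kills the subspace $C_1(T;\rho)$,'' which is just linearity together with the fact that the tree edges form a basis of that subspace.
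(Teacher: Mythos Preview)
Your proof is correct and follows essentially the same approach as the paper: both reduce to the identity $\bar T^2(b_i)=\bar T(b_i)$ on the basis of Lemma~\ref{lem:basis} and then extend by linearity. Your version simply spells out why $\bar T^2(b_i)=\bar T(b_i)$ holds via the decomposition $\bar T(b_i)=b_i-u_i$ with $u_i\in C_1(T;\rho)$, which the paper leaves implicit.
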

 
 \begin{proof} The definition of $\bar T$ shows
$\bar T^2(b_i) = \bar T(b_i)$. 
 Write $z = \sum_i a_i \bar T(b_i)$. Then 
 \[
 \bar T(z) = \sum_i a_i \bar T^2(b_i) = \sum_i a_i \bar T(b_i) = z\, . \qedhere
 \]
  \end{proof}

Given a $\rho$-spanning tree $T$, consider an edge $b_i \in \Gamma_1 \setminus T_1$ as well
as an edge $b_j \in  T_1$. Let $U = (T \setminus b_j) \cup b_i$. 

\begin{lem} $U$ is a $\rho$-spanning tree if and only if
 $\langle \bar T(b_i),b_j\rangle \ne 0$.
\end{lem}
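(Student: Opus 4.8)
The statement to prove is: given a $\rho$-spanning tree $T$, an edge $b_i \in \Gamma_1 \setminus T_1$ and an edge $b_j \in T_1$, the subcomplex $U = (T \setminus b_j) \cup b_i$ is a $\rho$-spanning tree if and only if $\langle \bar T(b_i), b_j\rangle \ne 0$. The plan is to exploit the characterization of $\rho$-spanning trees from Lemma \ref{lem:characterizeT}: since $U_0 = \Gamma_0$ and $|U_1| = |T_1|$ equals the number of vertices, $U$ has trivial Euler characteristic, so $U$ is a $\rho$-spanning tree if and only if $H_1(U;\rho) = 0$, equivalently $H_0(U;\rho) = 0$ (the two-stage complex for $U$ has equal-dimensional terms, so one vanishes iff the other does). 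So the whole problem reduces to deciding when $H_1(U;\rho)$ vanishes.

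First I would pass to the relative picture. Consider the subcomplex $V := T \cup b_i \subset \Gamma$; then $U = V \setminus b_j$ and $T = V \setminus b_i$, and $T \cap U$ is the common tree $T \setminus b_j$. By Lemma \ref{lem:basis} applied inside $V$ (or directly by the construction of $\bar T$), $H_1(V;\rho)$ is one-dimensional, spanned by the cycle $c := \bar T(b_i)$, which is characterized by $\langle c, b_i\rangle = 1$ and $c \in C_1(V;\rho)$. Now examine the long exact sequence of the pair $(V, U)$, noting $C_\ast(V,U;\rho) \cong C_\ast(b_j, \partial b_j;\rho)$ so that $H_1(V,U;\rho) \cong \C$ generated by the class of $b_j$, and $H_0(V,U;\rho) = 0$:
\[
0 \to H_1(U;\rho) \to H_1(V;\rho) \xrightarrow{\ \pi\ } H_1(V,U;\rho) \to H_0(U;\rho) \to H_0(V;\rho) \to 0 .
\]
Here $H_0(V;\rho) = 0$ since $V \supset T$ and $H_0(T;\rho) = 0$ by the spanning-tree hypothesis, so $H_0(U;\rho) \cong \operatorname{coker}(\pi)$ and $H_1(U;\rho) \cong \ker(\pi)$. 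The map $\pi\colon \C\langle c\rangle \to \C\langle b_j\rangle$ sends $c$ to its $b_j$-coordinate, which is exactly $\langle c, b_j\rangle = \langle \bar T(b_i), b_j\rangle$. Thus $\pi$ is an isomorphism iff $\langle \bar T(b_i), b_j\rangle \ne 0$, in which case $H_1(U;\rho) = 0$ and $U$ is a $\rho$-spanning tree; and if the coordinate vanishes then $\pi = 0$, forcing $H_1(U;\rho) \cong H_1(V;\rho) \ne 0$, so $U$ is not a $\rho$-spanning tree.

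The only genuinely delicate point will be bookkeeping the long exact sequence correctly — in particular confirming $H_1(V,U;\rho) \cong \C$ with the class of $b_j$ as generator and identifying the connecting-type map $H_1(V;\rho) \to H_1(V,U;\rho)$ with the coordinate functional $z \mapsto \langle z, b_j\rangle$. This is immediate once one writes $C_1(V;\rho) = C_1(U;\rho) \oplus \C b_j$ as vector spaces and observes that the quotient chain complex $C_\ast(V,U;\rho)$ has $\partial = 0$ (since $\partial b_j \in C_0(U;\rho)$ as both endpoints of $b_j$ lie in $U_0 = \Gamma_0$), so $H_1(V,U;\rho) = C_1(V,U;\rho) = \C b_j$ and the induced map is projection onto the $b_j$-coordinate. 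Everything else is formal. As a sanity check one can also argue directly without the pair $(V,U)$: if $\langle \bar T(b_i), b_j\rangle = 0$ then $\bar T(b_i) \in C_1(U;\rho)$ is a nonzero cycle in $U$, so $H_1(U;\rho) \ne 0$; conversely if $\langle \bar T(b_i), b_j\rangle \ne 0$, then a nonzero cycle $z \in C_1(U;\rho) \subset C_1(V;\rho) = H_1(V;\rho) = \C \bar T(b_i)$ would be a scalar multiple of $\bar T(b_i)$, hence would have nonzero $b_j$-coordinate, contradicting $z \in C_1(U;\rho)$ — so $H_1(U;\rho) = 0$. I would likely present this second, more hands-on argument as the proof and mention the exact sequence only as motivation.
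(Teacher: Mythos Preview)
Your proof is correct and follows essentially the same route as the paper: both use the long exact sequence of the pair $(T\cup b_i,\,U)$, identify $H_1(T\cup b_i,U;\rho)\cong \C$ with generator $b_j$, and observe that the map $H_1(T\cup b_i;\rho)\to H_1(T\cup b_i,U;\rho)$ sends $\bar T(b_i)$ to $\langle \bar T(b_i),b_j\rangle\,b_j$. Your preliminary Euler-characteristic remark (that $\dim H_1(U;\rho)=\dim H_0(U;\rho)$) replaces the paper's first auxiliary exact sequence for $(T,\,T\setminus b_j)$; one small slip in your hands-on variant is the clause ``$C_1(V;\rho)=H_1(V;\rho)$'', which should read $Z_1(V;\rho)=H_1(V;\rho)$.
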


\begin{proof} Throughout this 
proof we use local coefficients in $\rho$ but suppress this
from the notation. We have an exact sequence
\[
0 \to H_1(T \setminus b_j) \to H_1(T) \to H_1(b_j,\partial b_j) \to H_0(T \setminus b_j) \to 0\, ,
\]
where we are using the fact that $H_0(T) = 0$.
Since $H_1(T) = 0$ and $\dim_{\C} H_1(b_j,\partial b_j)= 1$,
we infer that $H_1(T \setminus b_j) = 0$ and  $\dim_{\C} H_0(T\setminus b_j)= 1$.

The inclusion $U \subset T \cup b_i$ induces another exact sequence
\[
0 \to H_1(U) \to H_1(T\cup b_i) \to H_1(b_j,\partial b_j) \to H_0(U) \to 0\, ,
\]
and the homomorphism $H_1(T\cup b_i) \to H_1(b_j,\partial b_j)$ is a map
of rank one vector spaces that is induced by sending
the preferred cycle $c \in H_1(T\cup b)$ 
to $\langle \bar T(b_i),b_j\rangle$ with respect to the preferred identification
$H_1(b,\partial b) \cong \C$. Consequently, $U$ is a $\rho$-spanning tree if and only if
$\langle \bar T(b_i),b_j\rangle \ne 0$.
\end{proof}

\begin{prop}\label{prop:self-adjoint} With $b_i, b_j, T, U$ as above, we have
\[
\hat \rho_T \langle \bar T(b_i),b_j\rangle = \hat \rho_U \langle b_i,\bar U(b_j)\rangle \, .
\]
\end{prop}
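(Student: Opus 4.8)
The idea is to realise both inner products in the statement inside the single one-dimensional space $H_1(T\cup b_i;\rho) = H_1(U\cup b_j;\rho)$, and then to reduce the asserted equality to one combinatorial identity on the common subcomplex of $T$ and $U$. Set $W := T\setminus b_j = U\setminus b_i$, so that $Z := T\cup b_i = U\cup b_j = W\cup b_i\cup b_j$. As established in the proof of the previous lemma, $H_1(W;\rho)=0$ and $\dim_\C H_0(W;\rho)=1$; since homology is additive over components, $W$ therefore has exactly one component $W_0$ that is a tree, every other component being of CRSF type (a unique circuit, carrying nontrivial holonomy). Let $\hat\rho_W$ be the product of the factors $2-\rho_C-\rho_C^*$ over the circuit-bearing components of $W$. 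Fix a vertex $v_0\in W_0$ and identify $H_0(W;\rho)$ with $\C$ by sending $[v_0]\mapsto1$. Since $\rho$ is unitary, for $v\in W_0$ the class $[v]$ is then a \emph{unit} complex number, namely the holonomy of the $W_0$-path from $v_0$ to $v$, whereas $[v]=0$ for $v$ outside $W_0$; for an edge $b\notin W$ write $\delta(b):=[\partial b]\in H_0(W;\rho)=\C$. (The statement presupposes $U$ a $\rho$-spanning tree, so that $\bar U$ is defined; otherwise $\langle\bar T(b_i),b_j\rangle=0$ by the previous lemma.)

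The long exact sequence of the pair $(Z,W)$ together with $H_1(W;\rho)=0$ identifies $H_1(Z;\rho)$ with $\ker(\delta\colon \C b_i\oplus\C b_j\to \C)$, where $\delta(xb_i+yb_j)=x\delta(b_i)+y\delta(b_j)$. As $T=W\cup b_j$ and $U=W\cup b_i$ are $\rho$-spanning trees, both $\delta(b_i)$ and $\delta(b_j)$ are nonzero (if $b$ closes up a circuit inside $W_0$ this records that the holonomy of that circuit is $\ne1$; if $b$ joins $W_0$ to a circuit-bearing component then $\delta(b)$ is a unit). Hence $H_1(Z;\rho)$ is spanned by a cycle $c$ whose $b_i$-coefficient is $\delta(b_j)$ and whose $b_j$-coefficient is $-\delta(b_i)$. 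Since $\bar T(b_i)$ is the unique element of $H_1(T\cup b_i;\rho)=H_1(Z;\rho)$ with $b_i$-coefficient $1$, we get $\bar T(b_i)=c/\delta(b_j)$, so $\langle\bar T(b_i),b_j\rangle=-\delta(b_i)/\delta(b_j)$; symmetrically $\bar U(b_j)=-c/\delta(b_i)$ and $\langle b_i,\bar U(b_j)\rangle = \overline{-\delta(b_j)/\delta(b_i)}$. Substituting these and using that $\hat\rho_T,\hat\rho_U\in\R$, the claim becomes $\hat\rho_T\,|\delta(b_i)|^2 = \hat\rho_U\,|\delta(b_j)|^2$.

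It therefore suffices to prove the identity $\hat\rho_{W\cup b}=|\delta(b)|^2\,\hat\rho_W$ for every edge $b$ with $W\cup b$ a $\rho$-spanning tree; applying it to $b_j$ (so $\hat\rho_T=|\delta(b_j)|^2\hat\rho_W$) and to $b_i$ (so $\hat\rho_U=|\delta(b_i)|^2\hat\rho_W$) makes both sides above equal to $|\delta(b_i)|^2\,|\delta(b_j)|^2\,\hat\rho_W$, finishing the proof. To establish this identity I would split on the position of the endpoints of $b$ (both outside $W_0$ cannot occur, as it would leave a tree component or create a component of negative Euler characteristic). If both endpoints lie in $W_0$, then $W\cup b$ is the union of $W_0\cup b$ — which carries one new circuit $D_b$ through $b$ — with the circuit-bearing components of $W$; transporting around $D_b$ gives $\delta(b)=(\text{unit})\cdot(\rho_{D_b}-1)$, so $|\delta(b)|^2 = (\rho_{D_b}-1)(\rho_{D_b}^*-1) = 2-\rho_{D_b}-\rho_{D_b}^*$, which is exactly the factor by which $\hat\rho_{W\cup b}$ exceeds $\hat\rho_W$. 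If one endpoint lies in $W_0$ and the other in a circuit-bearing component, then $b$ merges $W_0$ into that component without creating any new circuit, so $\hat\rho_{W\cup b}=\hat\rho_W$, while in $\delta(b)=\rho_b[d_0 b]-[d_1 b]$ one vertex class vanishes and the other is a unit, so $|\delta(b)|^2=1$. The only delicate point — and the only place where unitarity of $\rho$ is genuinely used — is precisely this bookkeeping: matching the change in the circuit structure of $W$ against $|\delta(b)|^2$ via $|\rho_C-1|^2 = 2-\rho_C-\rho_C^*$ and the fact that transport along tree-paths has modulus one. Everything else is formal manipulation of the long exact sequence and the definition of $\bar T$.
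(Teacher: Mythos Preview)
Your argument is correct and genuinely different from the paper's. The paper splits into two cases according to whether $b_i$ is attached to one or two components of $T$, then in each case explicitly constructs a chain $c\in C_1(T;\rho)$ with $\partial c=\partial b_i$, writes down its coefficient along each edge of a path in $T$ (via formulas such as \eqref{eqn:component} and \eqref{eqn:compute-bar-T}), and finally checks the identity $\hat\rho_T\alpha=\hat\rho_U(\alpha^{-1})^*$ by direct manipulation of these products of $\rho_e^{\pm1}$'s. It is an explicit, path-based computation.

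Your route is more homological and more symmetric in $T$ and $U$: by passing to $W=T\setminus b_j=U\setminus b_i$ and reading off the connecting homomorphism of the pair $(Z,W)$, you express both $\langle\bar T(b_i),b_j\rangle$ and $\langle b_i,\bar U(b_j)\rangle$ in terms of the same two numbers $\delta(b_i),\delta(b_j)\in H_0(W;\rho)\cong\C$, and the statement collapses to the single identity $\hat\rho_{W\cup b}=|\delta(b)|^2\,\hat\rho_W$. That identity is then a two-line check in each of the two possible positions of $b$ relative to the unique tree component $W_0$. This is shorter, makes the role of unitarity transparent (it enters only through $|\rho_C-1|^2=2-\rho_C-\rho_C^*$ and the fact that tree-transport has modulus one), and avoids choosing paths or writing explicit edge-by-edge formulas. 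What the paper's computation buys in return is an explicit description of the coefficients of $\bar T(b_i)$ along every edge, which may be of independent use; your approach trades that away for a cleaner structural argument.
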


\begin{rem} Proposition \ref{prop:self-adjoint} will be
a key step in verifying the
Twisted Projection Formula (Theorem \ref{bigthm:twisted-kirchhoff}).  
Although we will have managed to reduce most of the 
argument to algebraic topology, 
we cannot completely eliminate combinatorics
from the proof entirely (the same is true with respect to the classical theorem; see \cite{NS}). However, Proposition \ref{prop:self-adjoint} effectively minimizes the role of combinatorics to a kind of general and relatively simple statement.
\end{rem}

\begin{proof}[Proof of Proposition \ref{prop:self-adjoint}] There are two cases to consider:
  either $b_i$ is attached to two distinct components of $T$ or $b_1$ is
  attached to a single component of $T$. We proceed by direct calculation
  in either case. Figure~\ref{fig:prop-cases} gives a visualization of the cases at hand.
  
  {\it Case 1:} Assume that $b_i$ is attached to two distinct components
of $T$, say $A$ and $B$. By switching the roles of $A$ and $B$ if necessary, we
may suppose that $(d_0(b_i),d_1(b_i)) = (w,v)$, where $v$ lies in $A$
and $w$ lies in $B$. Without loss in generality assume that $b_j$ lies in $A$. 
Let $C$ be the unique circuit of $A$ and $C'$ the unique circuit of $B$.

Then
\[
v = \partial c 
\]
for $c \in C_1(A;\rho)$. We may then write $c = c_0 + \alpha b_j$, where $\alpha \in \C$ and
$\langle c_0,b_j\rangle = 0$. Similarly, we write $w = \partial d$, where $d \in C_1(B;\rho)$.
Then $\langle \bar T(b_i),b_j\rangle$ equals
\[
\langle b_i - (\rho_{b_i} d - c_0 - \alpha b_j), b_j \rangle = \alpha \, ,
\]
since $\partial(\rho_{b_i} d - c_0 - \alpha b_j) = \partial b_i$ and
$\rho_{b_i} d - c_0 - \alpha b_j$ is a chain of $T$.

A similar calculation shows $\langle b_i,\bar U (b_j)\rangle$ equals 
\[
\langle b_i, b_j - \tfrac{\rho_{b_i}d-b_i-c_0}{\alpha} \rangle =  (\alpha^{-1})^\ast\, .
\]
In order to compute $\alpha$, it is enough to identify the 1-chain $c \in C_1(A;\rho)$ whose boundary equals $v$, since then $\langle c,b_j \rangle = -\alpha$.

To find $c$ we rename $v = v_1$ and choose a vertex $v_k$ on the unique
circuit of $A$ together with an embedded path of edges $e_1,\dots e_k$ which connects $v_1$ to $v_k$. Without loss in generality, we can assume that none of the edges $e_i$ lies in the unique cycle of $A$. Let $e_{k+1},\dots, e_{n}$ denote the sequence of edges given by
the traversing the unique cycle of $A$ such that $v_k$ is a vertex of both $e_{k+1}$ and $e_n$.
Then $c$ is a linear combination of the edges $e_i$ which can be explicitly computed 
using the fact that $\partial e_i = \rho_{e_i}d_0(e_i) - d_1(e_i)$. 
Then a straightforward calculation yields the expression for the component of $c$ along the edge $e_i$ as
\begin{equation}\label{eqn:component}
\langle c,e_i\rangle  = -\frac{\rho_1^{s_1}\ldots \rho_{i-1}^{s_{i-1}}}{\rho_A - 1}\, ,
\end{equation}
where $\rho_i := \rho_{e_i}$ and $s_i = \pm 1$ according as to whether $e_i$ points
in the direction of the path or not (we have also oriented $A$ in a way
that is  compatible with our choice of path).
 In particular, $b_j = e_\ell$ for some index $\ell$, 
so 
\[
\alpha = 
\frac{\rho_1^{s_1}\ldots \rho_{\ell-1}^{s_{\ell-1}}}{\rho_A - 1} \, ,
\]
Since $\hat \rho_T =\hat\rho_A\hat\rho_B\hat  \rho'$, where  $\hat\rho'$
is the product of the $\hat\rho_{T^\alpha}$ ranging over the 
remaining components of $T$, we have  
\[
\hat\rho_T\alpha = \rho_1^{s_1}\ldots \rho_{i-1}^{s_{\ell-1}}(\rho_A^*-1)\hat\rho_B \hat \rho'\, .
\]
Since $\hat\rho_U = \hat\rho_B\hat \rho'$ we see
\[
 \hat\rho_U(\alpha^{-1})^* = \hat\rho_T \alpha \, ,
 \] 
which concludes Case 1.
\medskip

\begin{figure}
\centering
\begin{subfigure}{.45\textwidth}
  \centering
  \includegraphics[scale = 0.4]{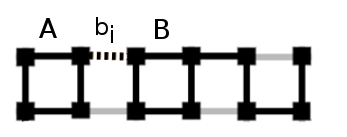}
  \caption{ $b_i$ bridges two distinct \\ components of $T$.}
  \label{fig:sub1}
\end{subfigure}%
 \begin{subfigure}{.5\textwidth}
  \centering
  \includegraphics[scale = 0.5]{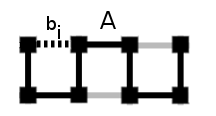}
  \caption{ $b_i$  attached to a single component of $T$.}
  \label{fig:sub2}
\end{subfigure}
\caption{ The two cases of Proposition~\ref{prop:self-adjoint}}
\label{fig:prop-cases}
\end{figure}
\noindent {\it Case 2:} In this instance $b_i$ is attached to a single component $A$ of $T$.
In this case we need to find a 1-chain $c$ of $A$ such that $\partial c = \partial b_i$.
Arguing in an analogous way as in the beginning of Case 1, if we set
\begin{equation} \label{eq:alpha-Case-2}
\langle \bar T(b_i),b_j \rangle = \alpha
\end{equation}
then it follows that
\begin{equation}\label{eq:alpha-inverse-conj-Case-2}
 \langle \bar b_i,\bar U(b_j) \rangle = (\alpha^{-1})^* \, .
\end{equation}

Suppose  $d_1(b_i) = v$ and $d_0(b_i) = w$ (where it is possible that $v = w$). 
We select a simple path $e_1,\ldots,e_k$ of edges of $A$ such that $v$ meets $e_1$
and $w$ meets $e_k$. Let us rename $b_i$ as $e_{k+1}$.
Then $c$ is a linear combination of the edges $e_i$ for $1\le i \le k$. Let $C$ denote the unique circuit of $A$. Then $A$ decomposes as
\[
A_- \cup C \cup A_+
\]
in which $A_-$ is a connected subgraph of $A$ that meets the vertex $v$ and
$A_+$ is the connected subgraph of $A$ which meets the vertex $w$.
For a given index $i$, consider the expressions 
\[
\alpha_- := \prod_{1\le j < i} \rho_j^{s_j} \quad \alpha_+ := \prod_{i \le j \le k+1} \rho_j^{s_j}\, .
\]
A calculation
similar to that appearing in Case 1 gives, for $0\le i \le k$,
\begin{equation} \label{eqn:compute-bar-T}
\langle c,e_i \rangle = 
\begin{cases} 
\alpha_-\, , & \text{ if } e_i \subset A_- \, ,\\
(\rho_A - 1)^{-1}(\alpha_+ - \alpha_-)\, ,
& \text{ if } e_i\subset C \, , \\
\alpha_+\, , & \text{ if } e_i \subset A_+ \, .
\end{cases}
\end{equation} 
Then if $\beta_j = e_\ell$ for some $\ell$, 
we have $\langle \bar T(\beta_i),\beta_j \rangle = -\langle c,e_\ell\rangle$.
As before, we have $\hat\rho_T = \hat \rho_A \hat\rho'$,  where
$\hat \rho'$ is a product of $\hat \rho_{T^\alpha}$ for $T^\alpha$ ranging
over the other components of $T$. Consequently,
\[
\langle \hat\rho_T \bar T(\beta_i),\beta_j \rangle = \hat \rho_A \hat\rho'\alpha
\]
where $\alpha = -\langle c,e_\ell\rangle $ is explicitly
given by Eqn.~\eqref{eqn:compute-bar-T}.

The remainder of the  argument is just as in Case 1. 
A straightforward calculation that we omit shows
\begin{equation} \label{eqn:hat-rho-U}
\hat \rho_U = \begin{cases}
\hat \rho_T & b_j \subset A_- \cup A_+\, , \\
(2+ \alpha_+\alpha_-^* - \alpha_+^*\alpha_-)\hat\rho' & b_j \subset C
\end{cases} 
\end{equation}
Then use Eq.~\eqref{eq:alpha-inverse-conj-Case-2} and Eq.~\eqref{eqn:compute-bar-T}
to identify the product $\hat\rho_U(\alpha^{-1})^*$. We infer
that it coincides with
$\hat\rho_T \alpha$, thereby completing the proof.
\end{proof}

\section{Proof of Theorem \ref{bigthm:twisted-kirchhoff} and 
Corollary \ref{cor:twisted_Kirchhoff_formula} \label{section:network_proof}}

\begin{lem} \label{lem:TU} For distinct edges
$b_i,b_j \in \Gamma_1$, let $\cal T_{ij}$ be the set of $\rho$-spanning
 trees such that $\langle \bar T(b_i),b_j \rangle \ne 0$.  Then
 \[
 \sum_{T\in \cal T_{ij}}w_T \langle \bar T(b_i), b_j\rangle_R
 =  \sum_{U\in \cal T_{ji}}w_U \langle b_i, \bar  U(b_j)\rangle_R \, .
 \]
 \end{lem}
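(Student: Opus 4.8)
The statement of Lemma~\ref{lem:TU} is a ``summed'' version of Proposition~\ref{prop:self-adjoint}, and the plan is to reduce it to that proposition by a bijection on pairs (spanning tree, edge-swap). First I would unwind the modified inner product: since $\langle \bar T(b_i),b_j\rangle_R = r_{b_j}\langle \bar T(b_i),b_j\rangle$ and $\langle b_i,\bar U(b_j)\rangle_R = r_{b_i}\langle b_i,\bar U(b_j)\rangle$, and since $w_T = \hat\rho_T\prod_{b\in T_1}r_b^{-1}$, each summand on the left has the shape $\hat\rho_T\,r_{b_j}\,\langle \bar T(b_i),b_j\rangle\prod_{b\in T_1}r_b^{-1}$ and each summand on the right has the shape $\hat\rho_U\,r_{b_i}\,\langle b_i,\bar U(b_j)\rangle\prod_{b\in U_1}r_b^{-1}$.

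The key combinatorial observation is that the index sets $\mathcal T_{ij}$ and $\mathcal T_{ji}$ are matched by the elementary exchange: given $T\in\mathcal T_{ij}$, necessarily $b_i\notin T_1$; the condition $\langle\bar T(b_i),b_j\rangle\ne 0$ forces $b_j\in T_1$ (if $b_j\notin T_1$ then $\bar T(b_i)$ is supported away from $b_j$ by Lemma~\ref{lem:basis}'s description, or more directly $\langle\bar T(b_i),b_j\rangle=0$ when $b_j\notin T_1$ since $\bar T(b_i)\in C_1(T\cup b_i;\rho)$ which has no $b_j$-component), and then the preceding unnamed lemma (the one just before Proposition~\ref{prop:self-adjoint}) says $U=(T\setminus b_j)\cup b_i$ is again a $\rho$-spanning tree, and it lies in $\mathcal T_{ji}$ because Proposition~\ref{prop:self-adjoint} shows $\langle b_i,\bar U(b_j)\rangle\ne 0$ (the two inner products are linked by nonzero scalars $\hat\rho_T,\hat\rho_U$, which are nonzero by Lemma~\ref{lem:characterizeT} since the holonomies are nontrivial). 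The inverse map sends $U\mapsto(U\setminus b_i)\cup b_j$, so $T\leftrightarrow U$ is a bijection $\mathcal T_{ij}\xrightarrow{\sim}\mathcal T_{ji}$.

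It then suffices to check that corresponding summands agree. Under the bijection, $T_1$ and $U_1$ differ only by swapping $b_j$ for $b_i$, so $r_{b_j}\prod_{b\in T_1}r_b^{-1} = r_{b_i}\prod_{b\in U_1}r_b^{-1}$ (both equal $\prod_{b\in T_1\cup U_1,\,b\ne b_i,b_j}r_b^{-1}$, after one notes $r_{b_j}\cdot r_{b_j}^{-1}=1$ on the left and similarly on the right). Hence the resistance factors cancel identically and the claimed equality of summands reduces exactly to
\[
\hat\rho_T\,\langle \bar T(b_i),b_j\rangle \;=\; \hat\rho_U\,\langle b_i,\bar U(b_j)\rangle,
\]
which is Proposition~\ref{prop:self-adjoint}. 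Summing over $T\in\mathcal T_{ij}$ (equivalently over $U\in\mathcal T_{ji}$) gives the lemma.

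\textbf{Main obstacle.} The analysis above is essentially bookkeeping once Proposition~\ref{prop:self-adjoint} is in hand; the only genuinely delicate point is justifying that the exchange is well-defined and bijective in \emph{both} directions — in particular that $\langle\bar T(b_i),b_j\rangle\ne 0$ really does force $b_j\in T_1$ (so that the swap $(T\setminus b_j)\cup b_i$ even makes sense), and symmetrically that $\langle b_i,\bar U(b_j)\rangle\ne 0$ forces $b_i\in U_1$. This is where I'd be most careful: it uses that $\bar T(b_i)$ is a cycle in $C_1(T\cup b_i;\rho)$ with $b_i$-coefficient equal to $1$, hence has zero coefficient on any edge outside $T_1\cup\{b_i\}$, together with $b_i\ne b_j$. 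After that, the preceding lemma and Proposition~\ref{prop:self-adjoint} supply exactly the two facts needed ($U$ is a spanning tree; the two pairings are proportional with nonzero ratio $\hat\rho_T/\hat\rho_U$), and the proof closes.
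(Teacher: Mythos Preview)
Your proof is correct and follows essentially the same route as the paper: match $T\in\mathcal T_{ij}$ with $U=(T\setminus b_j)\cup b_i\in\mathcal T_{ji}$, use the weight identity $r_{b_j}w_T/\hat\rho_T=r_{b_i}w_U/\hat\rho_U$ together with Proposition~\ref{prop:self-adjoint} to equate the summands termwise, and sum. Your write-up is in fact more explicit than the paper's about why $b_i\notin T_1$, $b_j\in T_1$, and why the exchange is a bijection, points the paper leaves tacit.
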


\begin{proof}  From the definition of the weights, have
\begin{equation}\label{eq:r-w-rho-relation}
\frac{r_jw_T}{\hat\rho_T} = \frac{r_i w_U}{\hat \rho_U}\, .
\end{equation} 
Recall that $\langle \bar T(b_i),b_j\rangle_R = r_j
\langle \bar T(b_i),b_j\rangle$. Using Eq.~\eqref{eq:r-w-rho-relation} and
Proposition \ref{prop:self-adjoint}, we infer
\[
w_T \langle\bar T(b_i),b_j\rangle_R = w_U \langle b_i,\bar U(b_j)\rangle_R \, .
\]
Now sum up over all $T\in \cal T_{ij}$.
\end{proof}

\begin{proof}[Proof of Theorem \ref{bigthm:twisted-kirchhoff}]
Consider the operator $F:= \sum_T w_T\bar T$, where the sum is over
all $\rho$-spanning trees of $\Gamma$.
For any pair of edges $b_i$ and $b_j$ of $\Gamma$ we have
\begin{align*}
\langle \sum_T w_T\bar T(b_i),b_j\rangle_R
& = \sum_{T\in \cal T_{ij}}   w_T\langle\bar T(b_i),b_j\rangle_R \\
& =\sum_{U\in \cal T_{ji}}w_U \langle b_i, \bar U(b_j)\rangle_R \qquad
\text{ by Lemma \ref{lem:TU} } , \\
& = \langle b_i ,\sum_U w_U \bar U(b_j)\rangle_R \\
& = \langle b_i ,\sum_T w_T \bar T(b_j)\rangle_R
\end{align*}
Hence $F$ is self-adjoint in the modified inner product.

If $z  \in Z_1(\Gamma;\rho)$, then using Corollary \ref{for:idempotent},
we have 
\[
F(z) = (\sum_T w_T)\bar T(z)  =  (\sum_T w_T)z =: \Delta z
\]
Consequently,
$(1/\Delta) F$ restricts to the identity on $Z_d(X;\rho)$. As $(1/\Delta) F$
is self-adjoint, it is the Hermitian projection in the modified inner product.
\end{proof}

\begin{proof}[Proof of Corollary \ref{cor:twisted_Kirchhoff_formula}]  Let $z$ be the Hermitian projection of $R^{-1}\mathbf V$ in the modified inner product. Then $R^{-1}\mathbf V - z \in B^{d}_R(\Gamma;\rho)$, i.e.,
\[
0 = \langle R^{-1}\mathbf V - z, z'\rangle_R = \langle \mathbf V - Rz, z'\rangle
\]
for all $z'\in Z_{d}(X;\rho)$. Hence, $\mathbf V - Rz \in B^d(\Gamma;\rho)$.
The uniqueness of $z$ is a consequence of the fact that $B^{d}(\Gamma;\rho)$ is the orthogonal
complement to $Z_d(X;\rho)$ in the standard inner product.

The proof of the last part is given by direct calculation using the self-adjointness
of the operator $\sum_T w_T\bar T$:
\begin{align*}
\langle z,b\rangle &= \frac{1}{r_b} \langle z,b\rangle_R \, , \\
& = \frac{1}{r_b} \langle \frac{1}{\Delta}{\textstyle \sum}_T w_T R^{-1}\mathbf V,b\rangle_R \, ,\\
&= \frac{1}{\Delta}\sum_T \frac{w_T}{r_b} \langle R^{-1}\mathbf V,\bar T(b)\rangle_R \, , \\
&= \frac{1}{\Delta}\sum_T \frac{w_T}{r_b} \langle \mathbf V,\bar T(b)\rangle\, . \qedhere
\end{align*}
\end{proof}

\section{Proof of Theorem \ref{bigthm:twmtt}\label{sec:matrix-tree}}

The proof of Theorem \ref{bigthm:twmtt} is essentially the same
as the proof of \cite[th.~C]{CCK}. We will outline the essential steps.
The first step is to show that 
\begin{equation} \label{eqn:up-to-prefactor}
\det (\partial\partial_R^* \: C_0(\Gamma;\rho) \to C_0(\Gamma;\rho)) = \gamma 
\sum_T w_T
\end{equation}
where $T$ ranges over all $\rho$-spanning trees, and the pre-factor $\gamma$ is to be determined.  This step follows, {\it mutatis mudandis}, by the proof of 
\cite[prop.~4.2]{CCK}. We emphasize that $\gamma$ is independent of $R$.

The second and final step is to compute the prefactor $\gamma$ and show that it
equals 1. We  work perturbatively, following a modified version of
\cite[prop.~5.2]{CCK}.  To this end, let $\beta \in \R_{+}$ be the perturbation
parameter and fix a $\rho$-spanning tree $T$.  For any $W:\Gamma_1 \to \R$,
write $R = e^W$, $R_{\beta} = e^{\beta W}$, and set $\cal L_R = \partial \partial^*_R$. Define $\cal L_R^T
= \partial_T e^{-W}\partial_T^* : C_0(T; \rho) \rightarrow
C_0(T;\rho)$.

A choice of orthogonal projection $C_1(\Gamma;\rho) \rightarrow C_1(T;\rho)$ 
allows us to write
\[
  \cal L_R = \cal L_R^T + \delta \cal L.
\]
A standard expansion of the above operator allows
us to bound the elements of $\delta \cal L$
\[
  |\delta \cal L_{jk} | \leq e^{-\beta \min_{b \in \Gamma_1 \setminus T_1} W_b} B,
\]
where $B$ is independent of $W$ and $\beta$. 
Since $\gamma$ is independent of $R$, we choose 
$W: \Gamma_1 \rightarrow \R$ so that
\[ 
W_{b} > \sum_{\alpha \in T_1} W_{\alpha} -
k \min_{b' \in T_1} W_{b'} \quad \mbox{ for any } b \in \Gamma_1 \setminus T_1,
\]
where $k$ is number of edges of $\Gamma$. 
Our choice of $W$ implies that
in the $\beta \to \infty$ limit, the terms arising from $\cal L_R^T$
dominate those of $\delta \cal L$. Therefore,
\begin{equation} \label{eqn:low-temp}
\lim_{\beta\to \infty} \frac{\det \cal L^T_{R_\beta}}{\det \cal L_{R_\beta}}= 1.
\end{equation}


Substituting $\beta W$ for $W$ in Eqn.~\eqref{eqn:up-to-prefactor},
taking the $\beta\to \infty$ limit, substituting the relation  \eqref{eqn:low-temp} and some minor rewriting, we deduce 
 \[
 \det (\cal L^T_R) = \gamma w_T.
 \]
Note that $\cal L^T_R = \partial_T e^{-W}\partial_T^*$, and 
by definition of $w_T$, we have  $\det e^{-W} = \hat \rho^{-1}_T w_T$.
Consequently, 
\[
\det (\cal L^T_R)  = \hat \rho^{-1}_T w_T \det(\partial_T\partial^*_T).
\]
It follows that
\[
\gamma = \hat \rho^{-1}_T \det(\partial_T\partial^*_T).
\]
Theorem \ref{bigthm:twmtt} is then a consequence of the following.

\begin{lem} For any $\rho$-spanning tree $T$, we have
 \[
 \det (\partial_T\partial^*_T) = \hat\rho_T \, .
 \]
 Hence, $\gamma = 1$.
\end{lem}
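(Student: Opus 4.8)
The plan is to compute $\det(\partial_T \partial_T^\ast)$ by reducing to a product over the connected components of $T$, and then to compute the contribution of a single component, which by Lemma~\ref{lem:characterizeT} is a graph with trivial Euler characteristic, i.e.\ a ``lollipop'': a single embedded circuit $C$ with trees hanging off of it. Since $C_\ast(T;\rho) = \bigoplus_\alpha C_\ast(T^\alpha;\rho)$ as orthogonal direct sums and $\partial_T$ respects this decomposition, the Laplacian $\partial_T \partial_T^\ast$ is block diagonal, so $\det(\partial_T\partial_T^\ast) = \prod_\alpha \det(\partial_{T^\alpha}\partial_{T^\alpha}^\ast)$; matching this against $\hat\rho_T = \prod_\alpha \hat\rho_{T^\alpha}$, it suffices to treat a single component and show $\det(\partial_{A}\partial_{A}^\ast) = \hat\rho_A = (2 - \rho_C - \rho_C^\ast)$ where $\rho_C$ is the holonomy around its circuit.

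First I would observe that $\partial_A\colon C_1(A;\rho)\to C_0(A;\rho)$ is a square matrix (equal numbers of edges and vertices, by the trivial Euler characteristic), so $\det(\partial_A\partial_A^\ast) = |\det \partial_A|^2$ in the standard inner product. The task is thus to compute $\det\partial_A$. Here I would use subdivision invariance and a deformation-retraction/collapse argument: the trees hanging off the circuit $C$ can be collapsed one edge at a time. Concretely, if $e$ is a leaf edge of $A$ with a valence-one vertex $v = d_1(e)$ (say), then expanding $\det\partial_A$ along the column of $v$ picks up the single entry $-1$ (up to sign and a unit from $\rho$) times the minor obtained by deleting $e$ and $v$; iterating, one reduces to $C$ itself. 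For the circuit $C$, one computes directly: after choosing an orientation and ordering the edges $e_1,\dots,e_n$ and vertices around $C$, the matrix $\partial_C$ is (a twisted version of) a cyclic bidiagonal matrix, whose determinant is $\pm(\rho_C - 1)$ by a direct cofactor expansion or by recognizing it as the twisted cellular chain complex of $S^1$ (whose reduced homology computation already appears in the proof of Lemma~\ref{lem:characterizeT}). Hence $\det\partial_A = \pm u (\rho_C - 1)$ for some $u \in U(1)$ coming from the collapsed tree edges, and $|\det\partial_A|^2 = |\rho_C - 1|^2 = (\rho_C-1)(\rho_C^\ast-1) = \hat\rho_A$.

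The main obstacle, and the only place real care is needed, is the bookkeeping of signs and the $U(1)$-valued factors $\rho_b$ that accumulate during the tree-collapse step: one must verify these all have modulus one so they disappear upon taking $|\det\partial_A|^2$, and that the collapse genuinely reduces the determinant to that of $\partial_C$ without extra scalar factors of modulus $\neq 1$. This is most cleanly handled not by brute-force cofactor expansion but by noting that collapsing a free edge pair $(e,v)$ is an elementary acyclic subcomplex quotient, which changes $\det\partial$ only by the unit $\rho_e^{\pm 1}$ (the single nonzero entry in that row/column), all such units lying in $U(1)$. Alternatively, and perhaps most transparently, one can invoke the explicit component formula~\eqref{eqn:component} already derived in the proof of Proposition~\ref{prop:self-adjoint}: that computation effectively inverts $\partial_A$ on the relevant subspace and exhibits the denominator $\rho_A - 1$, from which $\det\partial_A$ (and hence $\det\partial_A\partial_A^\ast$) can be read off after checking the numerators are units. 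Once $\gamma = \hat\rho_T^{-1}\det(\partial_T\partial_T^\ast) = \hat\rho_T^{-1}\hat\rho_T = 1$, Theorem~\ref{bigthm:twmtt} follows from~\eqref{eqn:up-to-prefactor}.
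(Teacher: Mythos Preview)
Your proposal is correct and follows essentially the same route as the paper: reduce to a single component, observe that $\partial_{T^\alpha}$ is square so $\det(\partial_{T^\alpha}\partial_{T^\alpha}^\ast)=|\det\partial_{T^\alpha}|^2$, prune leaf edges to reduce to the unique circuit, and compute the circuit determinant directly as $\pm(\rho_C-1)$ up to a unit.

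The one substantive difference is how the leaf-pruning bookkeeping is handled. The paper first invokes gauge invariance: it acts by the gauge group $G=\mathrm{Map}(\Gamma_0,U(1))$ to arrange that $\rho$ is trivial on all edges of the classical tree $A\subset T^\alpha$, so that after gauging every leaf row has its single nonzero entry equal to $\pm 1$ and the determinant is literally unchanged under pruning. You instead skip the gauge step and absorb the accumulated $\rho_e^{\pm1}$ factors into a unit $u\in U(1)$, which disappears upon taking $|\det\partial_A|^2$. Both are valid; the paper's gauge-fixing makes the pruning step marginally cleaner (no units to track), while your version is more self-contained (no need to introduce the gauge group or verify $\det(\partial_g\partial_g^\ast)=\det(\partial\partial^\ast)$). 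A minor slip: you write ``expanding along the column of $v$'' where $v$ is a vertex; with the paper's conventions vertices index rows, so you mean the row of $v$.
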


\begin{proof}
Clearly both sides of the equation factor as a product of over the connected
components of $T$.
So if $T^\alpha$ is a component of $T$, it will suffice to show 
 \[
 \det (\partial_{T^\alpha}\partial^*_{T^\alpha}) = \hat\rho_{T^\alpha} \, .
\]
This last statement can be proved in a number of ways. For example,
Kenyon \cite{Kenyon}
proves it using an interpretation of the determinant as a summation of cycles
over the symmetric group. We will give
a proof using gauge invariance.

The {\it gauge group} $G$ of $\Gamma$ is the group of functions $\Gamma_0 \to U(1)$
with respect to pointwise multiplication. It is convenient in what
follows to set $g_v = g(v)$ for a vertex $v$.
Then $G$ acts on line bundles according to the rule 
\[
g\cdot \rho(b) = g_{d_0(b)}g_{d_1(b)}^*\rho_b\,.
\]
Set $\rho^g = g\cdot \rho$.
To distinguish between boundary operators, we write
$\partial$ for the boundary operator associated with $\rho$,
and $\partial_g$ for the one associated with $\rho^g$.
Define an action 
\[
G \times C_0(\Gamma;\rho) \to C_0(\Gamma;\rho)
\]
by $g\cdot v = g_v v$, for $v\in \Gamma_0$.

It is then straightforward to check that for $g\in G$ we have
\[
\partial_g\partial_g^* = g \partial\partial^* g^{-1}.
\]
In particular, $\det (\partial_g\partial_g^*) = \det(\partial \partial^*)$.

Write $T^\alpha = A \cup b$, where $A$ is 
tree in the classical sense. We claim that there is a gauge $g\in G$ such that
$\partial_g(e) = 1$ for $e\in A_1$. To find $g$ we need to know that the system
of equations
\begin{equation} \label{eqn:equations}
g_{d_0(e)}g_{d_1(e)}^* \rho_e = 1, \qquad e \in A
\end{equation}
admits a solution. If we fix a vertex $i \in A_0$, we can set $g_i = 1$. Then
for any edge $ij$ of $A$ which connects $i$ to $j$, we set $g_j = \rho_{ij}^s$
with $s = \pm 1$ according as to whether $ij$ points inward towards $i$ or not.
Consider a vertex $k\ne i$ such that $jk$ is an edge of $A$. We set
$g_k = \rho_{jk}^sg_j$, where in this instance $s$ is $\pm 1$ according as
to whether the edge $jk$ points towards $j$ or not. Continuing in this fashion,
we obtain a solution to the system \eqref{eqn:equations}. For any
vertex $i$ not in $A$ we set $g_i = 1$. 
With respect to our choice of $g$, inspection shows that $\rho^g_b$ is the holonomy with respect to $\rho$ 
around the unique circuit of $A \cup b$ which is oriented in the direction of $b$. 
Hence, we can without loss in generality assume that the
original line bundle $\rho$ is trivial on every edge other than
$b$, and we are reduced to proving that 
$\det(\partial\partial^*) =(\rho_b -1)(\rho_b^*-1)$, where
$\partial$ is the boundary operator for $T^\alpha = A\cup b$.

The columns of the matrix associated with $\partial$
represent the edges of $A\cup b$ and 
the rows represent the vertices.
An edge $e$ of $A\cup b$ is said to be {\it loose} if it is attached to a vertex $i$
such that no other edge of $A\cup b$ is attached to $i$. If $e$ is loose, 
then the $e$-th column 
of $\partial$ has exactly two non-zero entries which are $\pm 1$ and these are
of opposite sign. We infer that the determinant of $\partial$ 
remains unchanged when we remove the edge $e$ and the vertex $i$ from $A\cup b$.
Iterating this procedure, we may assume without loss in generality that 
$A\cup b$ has no free edges. This means $A\cup b$ is a circuit. The determinant
of $\partial$ in this case is easy to compute and is given by $\pm(\rho_b -1)$.
Hence the determinant of $\partial^*\partial$ is 
$(\rho_b-1)(\rho_b^*-1)$.
\end{proof}

\end{document}